\begin{document}

\allowdisplaybreaks

%%%%%%%%%%%%%%%%%%%%%%%%%%%%%%%%%%%%%%%%%%%%%%%%%%%%%%%%%%%%%%%%%%%%%%
%% Title and Author Information

\title[Families of elliptic curves over $\PP^n$]{Heights and the specialization map for Families of elliptic curves over $\PP^n$}
\date{\today}
\author[W.P. Wong]{Wei Pin Wong}
\email{wongpin101@math.brown.edu}
\address{Mathematics Department, Box 1917
         Brown University, Providence, RI 02912 USA}
\subjclass[2010]{Primary: 11G05; Secondary: 11G50, 14G40}
\keywords{height function, family of elliptic curves, function field, higher dimensional, projective space, variety, hypersurface, specialization map}
%%%%%%%%%%%%%%%%%%%%%%%%%%%%%%%%%%%%%%%%%%%%%%%%%%%%%%%%%%%%%%%%%%%%%%

% \allowdisplaybreaks

\hyphenation{ca-non-i-cal semi-abel-ian}

%%%%%%%%%%%%%%%%%%%%%%%%%%%%%%%%%%%%%%%%%%%%%%%%%%%%%%%%%%%%%%%%%%%%%%
% Theorem environments
\newtheorem{theorem}{Theorem}
\newtheorem{lemma}[theorem]{Lemma}
\newtheorem{conjecture}[theorem]{Conjecture}
\newtheorem{proposition}[theorem]{Proposition}
\newtheorem{corollary}[theorem]{Corollary}
\newtheorem*{claim}{Claim}
\newtheorem*{theoremB}{Theorem B}
\newtheorem*{theoremA}{Theorem A}
\newtheorem*{corollaryB}{Corollary B}
\theoremstyle{definition}
% The * surpresses numbering
\newtheorem*{definition}{Definition}
\newtheorem{example}[theorem]{Example}
\newtheorem*{remark}{Remark}
\newtheorem{question}[theorem]{Question}

\theoremstyle{remark}
\newtheorem*{acknowledgement}{Acknowledgements}

%%%%%%%%%%%%%%%%%%%%%%%%%%%%%%%%%%%%%%%%%%%%%%%%%%%%%%%%%%%%%%%%%%%%%%

%%%%%%%% Set Up Environment for Notation %%%%%%%%%%%%%%
% This is currently set to allow quite wide items to be defined
\newenvironment{notation}[0]{%
  \begin{list}%
    {}%
    {\setlength{\itemindent}{0pt}
     \setlength{\labelwidth}{4\parindent}
     \setlength{\labelsep}{\parindent}
     \setlength{\leftmargin}{5\parindent}
     \setlength{\itemsep}{0pt}
     }%
   }%
  {\end{list}}

%%%%%%%% Set Up Environment for Parts in Theorems %%%%%%%%%%%%%%
\newenvironment{parts}[0]{%
  \begin{list}{}%
    {\setlength{\itemindent}{0pt}
     \setlength{\labelwidth}{1.5\parindent}
     \setlength{\labelsep}{.5\parindent}
     \setlength{\leftmargin}{2\parindent}
     \setlength{\itemsep}{0pt}
     }%
   }%
  {\end{list}}
% Use \Part{(a)}, instead of \item[(a)], to ensure upright font
\newcommand{\Part}[1]{\item[\upshape#1]}

%%%%%%%%%%%%%%%%%%
% Greek Alphabet %
%%%%%%%%%%%%%%%%%%
\renewcommand{\a}{\alpha}
\renewcommand{\b}{\beta}
\newcommand{\g}{\gamma}
\renewcommand{\d}{\delta}
\newcommand{\e}{\epsilon}
\newcommand{\f}{\varphi}
\newcommand{\bfphi}{{\boldsymbol{\f}}}
\renewcommand{\l}{\lambda}
\renewcommand{\k}{\kappa}
\newcommand{\lhat}{\hat\lambda}
\newcommand{\m}{\mu}
\newcommand{\bfmu}{{\boldsymbol{\mu}}}
\renewcommand{\o}{\omega}
\renewcommand{\r}{\rho}
\newcommand{\rbar}{{\bar\rho}}
\newcommand{\s}{\sigma}
\newcommand{\sbar}{{\bar\sigma}}
\renewcommand{\t}{\tau}
\newcommand{\z}{\zeta}

\newcommand{\D}{\Delta}
\newcommand{\G}{\Gamma}
\newcommand{\F}{\Phi}
\renewcommand{\L}{\Lambda}

%%%%%%%%%%%%%%%%%%%%
% Fraktur Alphabet %
%%%%%%%%%%%%%%%%%%%%
\newcommand{\ga}{{\mathfrak{a}}}
\newcommand{\gb}{{\mathfrak{b}}}
\newcommand{\gn}{{\mathfrak{n}}}
\newcommand{\gp}{{\mathfrak{p}}}
\newcommand{\gP}{{\mathfrak{P}}}
\newcommand{\gq}{{\mathfrak{q}}}

%%%%%%%%%%%%%%%%%%%
% Barred Alphabet %
%%%%%%%%%%%%%%%%%%%
\newcommand{\Abar}{{\bar A}}
\newcommand{\Ebar}{{\bar E}}
\newcommand{\kbar}{{\bar k}}
\newcommand{\Kbar}{{\bar K}}
\newcommand{\Pbar}{{\bar P}}
\newcommand{\Sbar}{{\bar S}}
\newcommand{\Tbar}{{\bar T}}

%%%%%%%%%%%%%%%%%%%%%%%%%
% Calligraphic Alphabet %
%%%%%%%%%%%%%%%%%%%%%%%%%
\newcommand{\Acal}{{\mathcal A}}
\newcommand{\Bcal}{{\mathcal B}}
\newcommand{\Ccal}{{\mathcal C}}
\newcommand{\Dcal}{{\mathcal D}}
\newcommand{\Ecal}{{\mathcal E}}
\newcommand{\Fcal}{{\mathcal F}}
\newcommand{\Gcal}{{\mathcal G}}
\newcommand{\Hcal}{{\mathcal H}}
\newcommand{\Ical}{{\mathcal I}}
\newcommand{\Jcal}{{\mathcal J}}
\newcommand{\Kcal}{{\mathcal K}}
\newcommand{\Lcal}{{\mathcal L}}
\newcommand{\Mcal}{{\mathcal M}}
\newcommand{\Ncal}{{\mathcal N}}
\newcommand{\Ocal}{{\mathcal O}}
\newcommand{\Pcal}{{\mathcal P}}
\newcommand{\Qcal}{{\mathcal Q}}
\newcommand{\Rcal}{{\mathcal R}}
\newcommand{\Scal}{{\mathcal S}}
\newcommand{\Tcal}{{\mathcal T}}
\newcommand{\Ucal}{{\mathcal U}}
\newcommand{\Vcal}{{\mathcal V}}
\newcommand{\Wcal}{{\mathcal W}}
\newcommand{\Xcal}{{\mathcal X}}
\newcommand{\Ycal}{{\mathcal Y}}
\newcommand{\Zcal}{{\mathcal Z}}

%%%%%%%%%%%%%%%%%%%%%%%%%%%%
% Blackboard Bold Alphabet %
%%%%%%%%%%%%%%%%%%%%%%%%%%%%
\renewcommand{\AA}{\mathbb{A}}
\newcommand{\BB}{\mathbb{B}}
\newcommand{\CC}{\mathbb{C}}
\newcommand{\FF}{\mathbb{F}}
\newcommand{\GG}{\mathbb{G}}
\newcommand{\NN}{\mathbb{N}}
\newcommand{\PP}{\mathbb{P}}
\newcommand{\QQ}{\mathbb{Q}}
\newcommand{\RR}{\mathbb{R}}
\newcommand{\ZZ}{\mathbb{Z}}
\newcommand{\CQ}{\overline{\mathbb{Q}}}
%%%%%%%%%%%%%%%%%%%%%%%%%%
% Boldface Math Alphabet %
%%%%%%%%%%%%%%%%%%%%%%%%%%
\newcommand{\bfa}{{\mathbf a}}
\newcommand{\bfb}{{\mathbf b}}
\newcommand{\bfc}{{\mathbf c}}
\newcommand{\bfd}{{\mathbf d}}
\newcommand{\bfe}{{\mathbf e}}
\newcommand{\bff}{{\mathbf f}}
\newcommand{\bfg}{{\mathbf g}}
\newcommand{\bfp}{{\mathbf p}}
\newcommand{\bfr}{{\mathbf r}}
\newcommand{\bfs}{{\mathbf s}}
\newcommand{\bft}{{\mathbf t}}
\newcommand{\bfu}{{\mathbf u}}
\newcommand{\bfv}{{\mathbf v}}
\newcommand{\bfw}{{\mathbf w}}
\newcommand{\bfx}{{\mathbf x}}
\newcommand{\bfy}{{\mathbf y}}
\newcommand{\bfz}{{\mathbf z}}
\newcommand{\bfA}{{\mathbf A}}
\newcommand{\bfF}{{\mathbf F}}
\newcommand{\bfB}{{\mathbf B}}
\newcommand{\bfD}{{\mathbf D}}
\newcommand{\bfG}{{\mathbf G}}
\newcommand{\bfI}{{\mathbf I}}
\newcommand{\bfM}{{\mathbf M}}
\newcommand{\bfzero}{{\boldsymbol{0}}}

%%%%%%%%%%%%%%%%%%%%%%%%%%%%%%
% Miscellaneous New Commands %
%%%%%%%%%%%%%%%%%%%%%%%%%%%%%%
\newcommand{\Aut}{\operatorname{Aut}}
\newcommand{\codim}{\operatorname{codim}}
\newcommand{\cufr}{\operatorname{cufr}}
\newcommand{\diag}{\operatorname{diag}}
\newcommand{\Disc}{\operatorname{Disc}}
\newcommand{\Div}{\operatorname{Div}}
\newcommand{\Dom}{\operatorname{Dom}}
\newcommand{\End}{\operatorname{End}}
\newcommand{\Fbar}{{\bar{F}}}
\newcommand{\Gal}{\operatorname{Gal}}
\newcommand{\GL}{\operatorname{GL}}
\newcommand{\Hom}{\operatorname{Hom}}
\newcommand{\Index}{\operatorname{Index}}
\newcommand{\Image}{\operatorname{Image}}
\newcommand{\hhat}{{\hat h}}
\newcommand{\Ker}{{\operatorname{ker}}}
\newcommand{\lcm}{\operatorname{lcm}}
\newcommand{\Lift}{\operatorname{Lift}}
\newcommand{\Mat}{\operatorname{Mat}}
\newcommand{\Mor}{\operatorname{Mor}}
\newcommand{\Moduli}{\mathcal{M}}
\newcommand{\Norm}{{\operatorname{\mathsf{N}}}}
\newcommand{\notdivide}{\nmid}
\newcommand{\normalsubgroup}{\triangleleft}
\newcommand{\NS}{\operatorname{NS}}
\newcommand{\onto}{\twoheadrightarrow}
\newcommand{\ord}{\operatorname{ord}}
\newcommand{\Orbit}{\mathcal{O}}
\newcommand{\Per}{\operatorname{Per}}
\newcommand{\Perp}{\operatorname{Perp}}
\newcommand{\PrePer}{\operatorname{PrePer}}
\newcommand{\PGL}{\operatorname{PGL}}
\newcommand{\Pic}{\operatorname{Pic}}
\newcommand{\Prob}{\operatorname{Prob}}
\newcommand{\Pfr}{\operatorname{fr}}
\newcommand{\Qbar}{{\bar{\QQ}}}
\newcommand{\rank}{\operatorname{rank}}
\newcommand{\Rat}{\operatorname{Rat}}
\newcommand{\Resultant}{\operatorname{Res}}
\renewcommand{\setminus}{\smallsetminus}
\newcommand{\sgn}{\operatorname{sgn}} 
\newcommand{\SL}{\operatorname{SL}}
\newcommand{\Span}{\operatorname{Span}}
\newcommand{\Spec}{\operatorname{Spec}}
\newcommand{\Support}{\operatorname{Supp}}
\newcommand{\sq}{\operatorname{sq}}
\newcommand{\sqfr}{\operatorname{sqfr}}
\newcommand{\tors}{{\textup{tors}}}
\newcommand{\Trace}{\operatorname{Trace}}
\newcommand{\tr}{{\textup{tr}}} % for K/k trace
\newcommand{\UHP}{{\mathfrak{h}}}    % Upper half plane
\newcommand{\ve}{\mathbf T} 
\newcommand{\<}{\langle}
\renewcommand{\>}{\rangle}
\newcommand{\longhookrightarrow}{\lhook\joinrel\longrightarrow}
\newcommand{\longonto}{\relbar\joinrel\twoheadrightarrow}
\newcommand{\mapsfrom}{\mathrel{\reflectbox{\ensuremath{\longmapsto}}}}
%%%%%%%%%%%%%%%%%%%%%%%%%%%%%%%%%%%%%%%%%%%%%%%%%%%%%%%%%%%%%%%%%%%%%%

\begin{abstract}
For $n\geq 2$, let $K=\overline{\QQ}(\PP^n)=\overline{\QQ}(T_1, \ldots, T_n)$. Let $E/K$ be the elliptic curve defined by a minimal Weierstrass equation $y^2=x^3+Ax+B$, with $A,B \in \overline{\QQ}[T_1, \ldots, T_n]$. There's a canonical height $\hat{h}_{E}$ on $E(K)$ induced by the divisor $(O)$, where $O$ is the zero element of $E(K)$. On the other hand, for each smooth hypersurface $\G$ in $\PP^n$ such that the reduction mod $\G$  of $E$, $E_{\G} / \overline{\QQ}(\G)$ is an elliptic curve with the zero element $O_\G$, there is also a canonical height $\hat{h}_{E_{\G}}$ on $E_{\G}(\overline{\QQ}(\G))$ that is induced by $ (O_\G)$. We prove that for any $P \in E(K)$, the equality $\hat{h}_{E_{\G}}(P_\G)/ \deg \G =\hat{h}_{E}(P)$ holds for almost all hypersurfaces in $\PP^n$. As a consequence, we show that for infinitely many $t \in \PP^n(\overline{\QQ})$, the specialization map $\sigma_t : E(K) \rightarrow E_t(\overline{\QQ})$ is injective.
\end{abstract}
\maketitle
%% \tableofcontents
%%%%%%%%%%%%%%%%%%%%%%%%%%%%%%%%%%%%%%%%%%%%%%%%%%%%%%%%%%%%%%%%%%%%%%
\section{Introduction}
%%%%%%%%%%%%%%%%%%%%%%%%%%%%%%%%%%%%%%%%%%%%%%%%%%%%%%%%%%%%%%%%%%%%%%
For $n \geq 2$, let $K=\overline{\QQ}(T_1, \ldots, T_n)$ be the function field of the projective space $\PP^n$ over $\overline{\QQ}$. Let $E/K$ be the elliptic curve over $K$ defined by the Weierstrass equation  
$$ Y^2Z=X^3+AXZ^2+BZ^3 $$ with $A,B \in \CQ[T_1, \ldots, T_n]$ such that it is minimal with respect to all but the infinity prime divisor in $\PP^n$, i.e. $$0\leq\ord_D(A) <4 \qquad \text{or}\qquad  0\leq \ord_D(B)<6  $$  for every prime divisor $D$ that is not the infinity hyperplane. %Also, we assume that $E/K$ is not split over $K$, i.e. $E$ is not $K$-isomorphic to $E_0 \times_{\CQ} K$ for any elliptic curve $E_0/\CQ$. 
For any hypersurface $\G$ in $\PP^n$ %, the local ring at $\G$ is a discrete valuation ring $\Ocal_\G$ and the Weierstrass equation is defined over $\Ocal_\G$ for all $\G$ but except the infinity hyperplane. Hence, if $\G$ is 
such that the reduction mod $\G$ of $E/K$, $E_\G$ is an elliptic curve over $\CQ(\G)$, we have a group homomorphism 
\begin{align*}
E(K) &\longrightarrow E_\G(\CQ(\G)) \\
P &\longmapsto P_\G.
\end{align*} 
Let $O$  be the zero element of $E(K)$ and $\hat{h}_E$ be the canonical height on $E(K)$ corresponded to the divisor $(O)$. Similarly, we denote $O_\G$ to be the zero element of $E_\G(\CQ(\G))$ and $\hat{h}_{E_\G}$ to be the canonical height on $E_\G(\CQ(\G))$ corresponded to the divisor $(O_\G)$. As a partial generalization of Silverman's theorem (\cite{SHS}, Theorem B) in the case of elliptic curves, we prove the following theorem that relates these heights:

\begin{theoremA} Given any $P \in E(K)$, there exists a set $B_P$ consisting of a finite number of codimension-two subvarieties in $\PP^n$, such that if $\G \subset \PP^n$ is a smooth hypersurface that does not contain any subvariety in $B_P$ and $E_\G/\CQ(\G)$ is non-singular, then
$$\frac{\hat{h}_{E_\G}(P_\G)}{\deg \G}= \hat{h}_E(P). $$
\end{theoremA}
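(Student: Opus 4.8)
The plan is to decompose both canonical heights into sums of N\'eron local heights and to match them one prime divisor at a time, the factor $\deg\G$ emerging from B\'ezout's theorem. Recall that over $K=\CQ(\PP^n)$, whose places are the prime divisors $D\subset\PP^n$ weighted by $n_D:=\deg D$ (normalized so that the product formula $\sum_D n_D\,\ord_D(f)=0$ holds for $f\in K^*$), the canonical height decomposes as $\hhat_E(P)=\sum_D n_D\,\lhat_D(P)$ for $P\ne O$, where $\lhat_D$ is the N\'eron local height at $D$; since $K$ has no archimedean places this sum is \emph{finite}, with $\lhat_D(P)=0$ unless $D$ divides $\D:=-16(4A^3+27B^2)$, or $D$ lies in the polar divisor of $x(P)$, or $D$ is the infinity hyperplane. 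I will let $\Scal_P$ denote this finite set of prime divisors. The same decomposition applies to $\hhat_{E_\G}$ over $\CQ(\G)$, whose places are the prime divisors $W$ of $\G$ — equivalently, the codimension-two subvarieties of $\PP^n$ lying on $\G$ — weighted by $n_W:=\deg_{\PP^n}W$; the two weight systems are compatible since both come from intersecting principal divisor classes against a power of the hyperplane class.

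I would then take $B_P$ to be the finite set of codimension-two subvarieties consisting, as $D,D'$ range over $\Scal_P$, of: the codimension-two components of the singular loci $D_{\mathrm{sing}}$; the components of $D\cap D'$ for $D\ne D'$; the components of $D\cap V(A)$, $D\cap V(B)$ and $D\cap V(\D)$; the components of $M\cap N$, where $x(P)=M/N$ with $M,N$ coprime forms; plus finitely many further subvarieties dictated by the local analysis below. Fix a smooth $\G$ containing no member of $B_P$, with $E_\G/\CQ(\G)$ non-singular. For each $D\in\Scal_P$ write the intersection cycle on $\G$ as $\G\cdot D=W_1+\dots+W_r$: since $\G$ is smooth and avoids the codimension-two part of $D_{\mathrm{sing}}$, it meets $D$ transversally at the generic point of each component, so the cycle is reduced and the $W_i$ are exactly the prime divisors of $\G$ contained in $D$; B\'ezout gives $\sum_i n_{W_i}=\deg D\cdot\deg\G=n_D\deg\G$. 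The crux will be to prove the local identity $\lhat_{W_i}(P_\G)=\lhat_D(P)$ for each $i$. Granting this, together with (a) $\lhat_W(P_\G)=0$ for every prime divisor $W$ of $\G$ not among the $W_i$ for some $D\in\Scal_P$, and (b) each $W_i$ lies on a unique $D\in\Scal_P$ (because $\G$ avoids the components of $D\cap D'$), one reindexes and computes
$$\hhat_{E_\G}(P_\G)=\sum_{W\subset\G}n_W\,\lhat_W(P_\G)=\sum_{D\in\Scal_P}\ \sum_i n_{W_i}\,\lhat_D(P)=\sum_{D\in\Scal_P}n_D\deg\G\cdot\lhat_D(P)=\deg\G\cdot\hhat_E(P),$$
which is the theorem (the case $P=O$ being trivial).

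Statement (a) I would handle as follows: if $\lhat_W(P_\G)\ne0$, then either $E_\G$ has bad reduction at $W$, forcing $W\subset V(\D|_\G)=V(\D)\cap\G=\bigcup_{D\mid\D}(D\cap\G)$ so that $W$ is one of the $W_i$; or $\ord_W(x(P_\G))<0$, and since $\G$ misses every component of $M\cap N$ the restrictions $M|_\G,N|_\G$ stay coprime, so the poles of $x(P)|_\G$ lie on $(V(N)\cap\G)\cup(L_\infty\cap\G)\subset\bigcup_{D\in\Scal_P}(D\cap\G)$ and again $W$ is one of the $W_i$. For the main local identity, the point is that $\lhat_D(P)$ depends only on $E$ and $P$ over the completed local ring $\widehat{\Ocal_{\PP^n,D}}\cong\CQ(D)[[t]]$ ($t$ a local equation for $D$), and $\lhat_{W_i}(P_\G)$ only on $E_\G,P_\G$ over $\widehat{\Ocal_{\G,W_i}}\cong\CQ(W_i)[[t]]$, while Tate's explicit formula computes the N\'eron local height at a minimal Weierstrass equation from the valuations $\ord(\D),\ord(c_4),\ord(x(P)),\ord(y(P))$, the Kodaira type, and the component of the special fibre hit by the point. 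Transversality of $\G$ and $D$ along $W_i$, plus the hypotheses that $\G$ avoids the components of $D\cap V(A)$, $D\cap V(B)$, $D\cap V(\D)$ and $M\cap N$, makes all these valuations survive restriction (e.g.\ $\ord_{W_i}(A|_\G)=\ord_D(A)$, $\ord_{W_i}(\D|_\G)=\ord_D(\D)$, $\ord_{W_i}(x(P_\G))=\ord_D(x(P))$); in particular the restricted equation stays minimal at $W_i$, with the same Kodaira type, and $P_\G$ meets the same component. The remaining residue-level invariants — those separating certain Kodaira types, and split from non-split multiplicative reduction — are governed by the irreducibility over $\CQ(D)$ of explicit auxiliary covers, and a Bertini-type irreducibility theorem lets these descend to $\CQ(W_i)$ once $\G$ avoids a further finite list of proper loci, which I add to $B_P$. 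With every input matched, Tate's formula returns the same value over $\CQ(D)[[t]]$ and over $\CQ(W_i)[[t]]$.

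I expect the main obstacle to be exactly this last, purely local, comparison: verifying that \emph{every} quantity feeding into the local height formula is preserved under restriction to a sufficiently generic hypersurface, and then organizing the finitely many exceptional configurations into the codimension-two set $B_P$ in the precise form the statement demands. By contrast the global structure — the product-formula normalization of places, B\'ezout's theorem for $\G\cdot D$, and the standard decomposition of the canonical height into N\'eron local heights over a function field — is routine.
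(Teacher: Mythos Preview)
Your approach via N\'eron local heights is genuinely different from the paper's. The paper never decomposes $\hhat_E$; instead it proves (Lemma~\ref{leh}) that the \emph{Weil} heights already satisfy $h_{E_\G}(P_\G)/\deg\G=h_E(P)$ for any smooth $\G$ avoiding the codimension-two components of the indeterminacy locus $I_{f_P}$ of the rational map $f_P:\PP^n\dashrightarrow\PP^2$ induced by $P$. This is pure B\'ezout on divisor \emph{classes}, so no transversality hypothesis is needed. One then replaces $P$ by $[2^m]P$, divides by $3\cdot 2^{2m}$, and lets $m\to\infty$; the real work is showing that the codimension-two parts of $I_{f_{[2^m]P}}$ for all large $m$ are contained in a single finite set $B_P$, which is done via the duplication formula (Lemma~\ref{DL}), a multiple $[N]P$ with everywhere non-singular reduction (Lemma~\ref{LNS}), and a second model $E'$ to handle $H_\infty$ (Lemma~\ref{PL}).

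Your proposal has a genuine gap at the transversality step. The claim ``since $\G$ is smooth and avoids the codimension-two part of $D_{\mathrm{sing}}$, it meets $D$ transversally at the generic point of each component, so the cycle is reduced'' is false: take $n=2$, $D$ a line, $\G$ a smooth conic tangent to it. When the intersection multiplicity $m_i$ of $\G$ and $D$ along $W_i$ exceeds~$1$, restriction scales all the relevant valuations by $m_i$ (e.g.\ $\ord_{W_i}(\D|_\G)=m_i\,\ord_D(\D)$), which can destroy minimality of the restricted Weierstrass equation at $W_i$ and certainly kills the asserted equality $\lhat_{W_i}(P_\G)=\lhat_D(P)$; your reduced-cycle form of B\'ezout, $\sum_i n_{W_i}=n_D\deg\G$, also fails. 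Tangency of $\G$ to a fixed $D$ is a condition on $\G$ that cannot be phrased as ``$\G$ contains a fixed codimension-two subvariety,'' so you cannot rule it out by enlarging $B_P$. The repair is to carry the multiplicities: B\'ezout actually gives $\sum_i m_i\,n_{W_i}=n_D\deg\G$, and you would then need $\lhat_{W_i}(P_\G)=m_i\,\lhat_D(P)$, which requires re-minimizing at $W_i$ and checking that Tate's local-height formula is homogeneous of degree~$1$ in the valuation with the Kodaira/component data tracking correctly. That is exactly the ``purely local comparison'' you flagged as the obstacle, and it is more delicate than your sketch suggests; the paper's Weil-height-limit argument sidesteps it entirely. (A smaller point: ``components of $D\cap V(A)$'' is not the right locus when $\ord_D(A)\ge 1$, since then $D\subset V(A)$; you want the components of $D\cap V(A/t^{\ord_D A})$, and similarly for $B,\D$.)
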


One can view $E/K$ as the generic fiber of an abelian scheme $\pi : \Ecal \longrightarrow U$, for some Zariski open dense subset $U \subset \PP^n$. Then for all $t \in U(\CQ)$, the specialization map
\begin{align*}
\s_t : E(K) &\longrightarrow E_t(\CQ) \\
P &\longmapsto P_t
\end{align*} is a group homomorphism. In the setting of general abelian varieties over $K=k(\PP^n)$ for any number field $k$, N\'eron (\cite{NPA}) proved that there are infinitely many $t \in \PP^n(k)$ such that $\s_t$ is injective. N\'eron proved this result by showing that the set of  $t \in \PP^n(k)$ for which  $\s_t$ is not injective is thin. Later, Masser (\cite{MSF}) proved a stronger result in a more general setting of abelian varieties $A$ defined over $K=k(V)$, the function field of a projective variety $V$ defined over a number field $k$. Masser proved that the specialization map $\s_t$ is ``almost always'' injective by showing that the set of $t \in V(k)$ with Weil height bounded by $h$ and such that $\s_t$ is not injective lies on a hypersurface of degree bounded by a constant power of $h$. Moreover, in the case where $V=C$ is a smooth curve, Silverman (\cite{SHS}, Theorem C) proved that the set of $t \in C(\overline{k})$ for which $\s_t$ is not injective is a set of bounded height. For the special case of elliptic curves over $K=\CQ(\PP^n)$, we obtain a similar result by combining Silverman's theorem and Theorem A:

\begin{theoremB} With notations as explained above and let $d \geq 1$ be a fixed integer, then there exist infinitely many smooth curves $C/\CQ$ of degree $d$ in $\PP^n$ that do not lie in the complement of $U$ and such that the set 
$$\{ t \in C(\CQ) \cap U(\CQ) \ | \ \s_t \text{ is not injective }\} $$
is a set of bounded height. Furthermore, the union of all such curves is Zariski dense in $\PP^n$.
\end{theoremB}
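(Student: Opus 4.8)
The plan is to build the desired curves $C$ by intersecting $\PP^n$ with $n-1$ generic hyperplanes, and to control the specialization behaviour along $C$ by a two-step reduction: first reduce from $\PP^n$ down to a curve using Theorem A repeatedly, then invoke Silverman's Theorem C on that curve. Fix a finite generating set $P_1,\dots,P_r$ for the (finitely generated) group $E(K)$, together with enough auxiliary points so that injectivity of $\s_t$ can be detected on a finite list; concretely, it suffices to arrange that $\s_t$ is injective on a fixed finite subset $S\subset E(K)$ whose image generates, which reduces to requiring $\hhat_{E_t}(\s_t(P))>0$ for the nonzero $P$ in a suitable finite set (using that $\hhat_{E_t}$ is a positive-definite quadratic form on $E_t(\CQ)$ modulo torsion, and that torsion injects for all but finitely many $t$).

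First I would apply Theorem A to each point in this finite set $S$, obtaining a finite collection $B_S=\bigcup_{P\in S}B_P$ of codimension-two subvarieties of $\PP^n$. Then I would choose a smooth curve $C$ of degree $d$ that (i) lies in $U$ generically, i.e. $C\not\subset \PP^n\setminus U$, (ii) avoids every subvariety in $B_S$, and (iii) is such that $E_C/\CQ(C)$ is non-singular. Such curves exist in abundance: take $C$ to be a general complete intersection of $n-1$ hypersurfaces of degrees chosen to multiply to $d$ (e.g. $n-2$ hyperplanes and one hypersurface of degree $d$), and note that the conditions of avoiding finitely many codimension-two loci and of meeting $U$ are Zariski-open and nonempty on the relevant family of curves, by a dimension count (a codimension-two subvariety is met by a general curve only if forced, and a general complete intersection curve misses any fixed proper closed subset of codimension $\geq 2$). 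Moreover the family of such $C$ sweeps out a Zariski-dense subset of $\PP^n$, since through a general point of $\PP^n$ one can pass such a complete intersection curve; this gives the last sentence of the theorem.

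For such a $C$, Theorem A yields $\hhat_{E_C}(P_C)=d\cdot\hhat_E(P)$ for every $P\in S$, where $E_C$ is the restriction of $E$ to $\CQ(C)$ and $P_C$ its specialization. In particular, since $E(K)$ has positive rank precisely on the non-torsion generators, $\hhat_{E_C}(P_C)>0$ for the non-torsion $P\in S$, so the composite specialization $E(K)\to E_C(\CQ(C))$ is injective on $S$, hence (by the generating property of $S$) injective. Now $E_C$ is an elliptic curve over the function field $\CQ(C)$ of a smooth curve, so Silverman's Theorem C (\cite{SHS}, Theorem C) applies: the set of $t\in C(\CQ)$ at which the specialization $E_C(\CQ(C))\to (E_C)_t(\CQ)$ fails to be injective has bounded height. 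Since for $t\in C(\CQ)\cap U(\CQ)$ the map $\s_t\colon E(K)\to E_t(\CQ)$ factors as $E(K)\to E_C(\CQ(C))\to (E_C)_t(\CQ)=E_t(\CQ)$ with the first arrow injective, failure of injectivity of $\s_t$ forces failure at the second arrow, so the bad set for $\s_t$ on $C$ is contained in a set of bounded height plus the finitely many points where $C$ meets the complement of $U$ or where torsion degenerates. This proves the height bound, and varying $C$ over the dense family above finishes the proof.

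The main obstacle I anticipate is bookkeeping at the two places where ``finitely many exceptions'' enter: one must verify that injectivity of $\s_t$ (as opposed to injectivity on the fixed finite set $S$) really is governed by the finite data, i.e. that there is no escape of rank or torsion under specialization outside a controlled set, and one must check that the complete-intersection curves $C$ can be chosen simultaneously to be smooth, to have the prescribed degree $d$, to avoid $B_S$, to meet $U$, and to keep $E_C$ non-singular — a union of Zariski-open conditions that is visibly nonempty but whose nonemptiness for \emph{every} fixed $d\geq 1$ (including small $d$ such as $d=1$, where $C$ is a line and one has less freedom) requires a small separate argument. Neither difficulty looks serious: the first is standard specialization theory for heights on abelian varieties, and the second is a routine incidence-variety dimension count.
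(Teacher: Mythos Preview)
Your overall strategy matches the paper's: iterate Theorem~A along a flag of hyperplanes to reduce to $\PP^2$, cut by a degree-$d$ curve there, and apply Silverman over that curve. But the step ``$\hhat_{E_C}(P_C)>0$ for each non-torsion $P\in S$, so $E(K)\to E_C(\CQ(C))$ is injective on $S$, hence injective'' is not valid as written: positivity of the canonical height on a generating set does not rule out a relation such as $(P_1)_C=(P_2)_C$ in $E_C(\CQ(C))$, so a homomorphism injective on a finite generating set need not be injective. What one actually needs is that the \emph{regulator} survives. The paper does exactly this: it applies Theorem~A to the generators $P^i$ and (implicitly, via the pairing) to the sums $P^i+P^j$, obtaining $\langle P^i_\G,P^j_\G\rangle_{E_\G}=(\deg\G)\,\langle P^i,P^j\rangle_E$ and hence $\det\bigl(\langle P^i_\G,P^j_\G\rangle_{E_\G}\bigr)\ne 0$. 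It is this nonvanishing determinant, carried down the flag and then combined with Silverman's limit theorem and continuity of $\det$ on the final curve, that gives $\ZZ$-linear independence of the specialized generators for $t$ of large height. Your set $S$ should be enlarged to include pairwise sums and the argument rewritten via the height pairing.

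Two smaller omissions. You assume $E(K)$ is finitely generated without comment; over $\CQ(\PP^n)$ this uses Lang--N\'eron and requires the $K/\CQ$-trace of $E$ to vanish, and the paper disposes of the isotrivial case separately at the outset. And when you iterate Theorem~A, the reduced Weierstrass equation of $E_\G$ over $\CQ(\G)\cong\CQ(\PP^{n-1})$ need not be minimal, so Theorem~A does not apply to $E_\G$ verbatim; the paper handles this by carrying an auxiliary finite set $S_E$ of codimension-two loci (where the generators specialize to singular points) through the induction, so that the lemma replacing minimality in the proof of Theorem~A continues to hold at each stage. Your last paragraph anticipates bookkeeping of this sort, but the regulator issue above is a genuine logical gap, not just bookkeeping.
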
 

When $K=\CQ(C)$ is the function field of a smooth curve $C/\CQ$, the hypersurfaces in $C$ are $\CQ$-points $t$ on the curve $C$. There is a Weil height $h_{C,\phi}$ (that depends on the closed embedding of $\phi : C \longhookrightarrow \PP^m$) on $C(\CQ)$ and we normalize it to $h_C:=h_{C,\phi}/\deg \phi$. Silverman (\cite{SHS}, Theorem B) proved that for $\pi : A\longrightarrow C$, a family of abelian varieties over a smooth curve, we have
$$\lim_{\substack{t \in  C(\CQ) \\ h_{C}(t) \rightarrow \infty} }   \frac{\hat{h}_{A_{t}}(P_{t})}{h_{C}(t)} = \hat{h}_{A_\eta}(P_\eta), $$ where $A_\eta/\CQ(C)$ is the generic fiber of $A$. Our Theorem A is analogous to the elliptic surface version of Silverman's theorem despite the fact that there's no limit involved and we view $\deg \G$ as the Weil height of $\G$. The main reason for this difference is the type of global field over which the reduction  $E_\G$ is defined: over a curve $C$, $\G=\{t\}$ is a point and $E_t$ is an elliptic curve over a number field, whereas over $\PP^n$ for $n \geq 2$, $E_{\G}$ is an elliptic curve over a function field. Consequently, the canonical height $\hat{h}_{E_{t}}$ on $E_{t}(\CQ)$ is derived from a Weil height in a number field and the canonical height $\hat{h}_{E_{\G}}$ on $E_{\G}(\CQ(\G))$ is derived from  a Weil height in a function field. As we shall see in section \ref{lemma}, the theory of Weil heights over the function field of a smooth hypersurface is related to the intersection theory of divisors in the projective space, i.e. B\'ezout's theorem and this is where the degree of $\G$ comes into play.

On the other hand, instead of looking at the fibers over smooth hypersurfaces of $\PP^n$, the author (\cite{WWP}) also tried to generalize Silverman's theorem %(\cite{SHS}, Theorem B) 
by looking at the fibers over points of $\PP^n$ of $E/\QQ(\PP^n)$. As remarked in that paper, the limit of the quotient $\hat{h}_{E_{t}}(P_t)/h_{\PP^n}(t) $ doesn't exist when $h_{\PP^n}(t)$ tends to infinity for $t \in \PP^n(\CQ)$. Thus, the author studied instead the average value of the quotient $\hat{h}_{E_{t}}(P_t)/h_{\PP^n}(t)$ over rational $t \in \PP^n(\QQ)$ with bounded height. As the bound tends to infinity, the author showed that this average remains finite and uniformly bounded below by a nonzero constant for all non-torsion $P$ in $E(\QQ(\PP^n))$. Even in this simple case of $\PP^n$ with $n\geq 2$, it is still an open question whether this average converges, let alone whether it converges to $\hat{h}_{E}(P)$. 

In section \ref{HF}, we remind the readers of the definition of the Weil height and canonical height on an elliptic curve defined over $k(V)$, the function field of an arbitrary smooth projective variety $V$ defined over a number field $k$. From now onwards, we will use the convention that varieties and subvarieties are always irreducible. Next, in section \ref{lemma}, we will prove some lemmas that relate the Weil heights on $E(K)$ and $E_\G(\CQ(\G))$. In fact, the first few lemmas show that the equality in Theorem A holds for Weil heights on $E_\G$, for a hypersurface $\G$ that does not contain certain codimension-two subvarieties in $\PP^n$ that depend on $P$. To pass from Weil heights to canonical heights, we will replace $P$ by $[2^m]P$, divide the Weil heights by $3 \cdot 2^{2m}$ and take the limit $m \rightarrow \infty$. The rest of the lemmas are used to show that the dependence of $\G$ on infinitely many $[2^m]P$ can be reduced to just not containing a finite number of codimension-two subvarieties. In the last section, we will apply Theorem A to prove Theorem B.

%%%%%%%%%%%%%%%%%%%%%%%%%%%%%%%%%%%%%%%%%%%%%%%%%%%%%%%%%%%%%%%%%%%%%%
\section{Heights in Function Fields} \label{HF}
%%%%%%%%%%%%%%%%%%%%%%%%%%%%%%%%%%%%%%%%%%%%%%%%%%%%%%%%%%%%%%%%%%%%%%
Let $V$ be a smooth projective variety defined over a number field $k$. We fix a closed embedding $\phi : V \longhookrightarrow \PP^n$ and thus the degree map $\deg_\phi$ is well-defined on $\Div_{\bar{k}}(V)$, i.e. for any prime divisor $D \in \Div_{\bar{k}}(V)$, $\deg_\phi (D)$ is the degree of the projective subvariety $\phi(D)$ in $\PP^n$ and we extend it linearly to all divisors.  For any $P=[f_0: \ldots: f_r]$ in $\PP^r(\bar{k}(V))$, the Weil height of $P$ is given by
$$h_{\PP^r(\bar{k}(V))}(P):= \sum_{\G} \max_i\{-\ord_{\G}(f_i)\}\deg_\phi(\G), $$ where the summation is taken over all prime divisors in $\Div_{\bar{k}}(V)$. This Weil height has the following geometrical interpretation:

\begin{proposition}(Lang \cite{LFD} Chapter 3, Proposition 3.2 ) \label{LH} Let $V$ be a projective variety in $\PP^n$, non-singular in codimension one, defined over a number field $k$, and let $P$ be a point in the projective space $\PP^r$, rational over $k(V)$. Let $f_P:V \dashrightarrow \PP^r$ be the rational map defined over $k$, determined by $P$. Then
$$h_{\PP^r(\bar{k}(V))}(P)=\deg f_P^{-1}(L) $$for any hyperplane $L$ of $\PP^r$, such that $ f_P^{-1}(L)$ is defined, the degree being that in the given projective embedding of $V$ in $\PP^n$.
\end{proposition}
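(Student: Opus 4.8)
The plan is to realize both sides as degrees (in the fixed embedding $\phi$) of effective divisors on $V$ that differ by the divisor of a rational function, and then invoke the invariance of $\deg_\phi$ under linear equivalence. Write $P=[f_0:\cdots:f_r]$ with $f_i\in\bar{k}(V)$ not all zero. First I would introduce the \emph{denominator divisor}
$$\mathfrak{e}:=\sum_{\Gamma}\Bigl(\max_i\{-\ord_{\Gamma}(f_i)\}\Bigr)\,\Gamma\ \in\ \Div_{\bar{k}}(V),$$
the sum taken over prime divisors $\Gamma$ of $V$; only finitely many coefficients are nonzero because each $f_i$ has finitely many zeros and poles, so $\mathfrak{e}$ is a genuine divisor, and $h_{\PP^r(\bar{k}(V))}(P)=\deg_\phi(\mathfrak{e})$ directly from the definition. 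The divisor $\mathfrak{e}$ is the smallest effective divisor with $\ord_\Gamma(f_i)+\ord_\Gamma(\mathfrak{e})\ge 0$ for all $i$ and $\Gamma$, equality being attained, for each $\Gamma$, by at least one index $i$.

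Next I would compute $f_P^{-1}(L)$ as a divisor on $V$. Fix a hyperplane $L=\{\sum_i a_iY_i=0\}$ with $a_i\in\bar{k}$ and set $g=\sum_i a_if_i$; the hypothesis that $f_P^{-1}(L)$ is defined is precisely the statement that $g\neq 0$ in $\bar{k}(V)$, i.e.\ that $L$ does not contain the image of $f_P$. Since $V$ is nonsingular in codimension one, the local ring $\Ocal_{V,\Gamma}$ at each prime divisor $\Gamma$ is a discrete valuation ring; choosing a uniformizer $t_\Gamma$ and writing $m_\Gamma=\max_i\{-\ord_\Gamma(f_i)\}$, the tuple $(t_\Gamma^{m_\Gamma}f_0,\dots,t_\Gamma^{m_\Gamma}f_r)$ is a local representative of $f_P$ near the generic point of $\Gamma$ in which every entry is regular and at least one is a unit. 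Pulling back the local equation of $L$ through this representative gives
$$\ord_\Gamma\bigl(f_P^{-1}(L)\bigr)=m_\Gamma+\ord_\Gamma(g),$$
and since $\ord_\Gamma(g)\ge\min_i\ord_\Gamma(f_i)=-m_\Gamma$ by the ultrametric inequality, the right-hand side is $\ge 0$, as it must be. Summing over all $\Gamma$ yields the divisor identity $f_P^{-1}(L)=\mathfrak{e}+\operatorname{div}(g)$ on $V$.

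Finally, applying $\deg_\phi$, which is additive on divisors, gives
$$\deg f_P^{-1}(L)=\deg_\phi(\mathfrak{e})+\deg_\phi(\operatorname{div} g)=h_{\PP^r(\bar{k}(V))}(P)+\deg_\phi(\operatorname{div} g),$$
so the whole statement reduces to the vanishing $\deg_\phi(\operatorname{div} g)=0$ for every nonzero $g\in\bar{k}(V)$ — equivalently, that $\deg_\phi$ depends only on the linear equivalence class of a divisor. I expect this to be the one genuinely substantive point: it expresses that on a projective variety a rational function has as many zeros as poles, once each prime divisor is weighted by its degree in the embedding. I would deduce it from intersection theory in $\PP^n$: $\deg_\phi(D)$ is the intersection number of $D$ with $\dim V-1$ general hyperplane sections (which is how $\deg_\phi$ of a prime divisor is defined in the first place), and this number is unchanged under linear equivalence because the hyperplanes may be slid into general position; since $\operatorname{div}(g)\sim 0$, its $\deg_\phi$ is $0$. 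Alternatively one reduces to the classical one-variable fact by viewing $g$ as a finite map $V\dashrightarrow\PP^1$ and counting the fibers over $0$ and $\infty$ with multiplicity via B\'ezout. With that vanishing in hand, the proposition follows.
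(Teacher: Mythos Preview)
The paper does not give its own proof of this proposition; it simply cites Lang (\cite{LFD}, Chapter~3, Proposition~3.2). Your argument is correct and is in substance the same as Lang's: one identifies the height with $\deg_\phi$ of the ``sup'' divisor $\mathfrak{e}$, shows $f_P^{-1}(L)=\mathfrak{e}+\operatorname{div}(g)$ by a local computation at each prime divisor, and concludes via $\deg_\phi(\operatorname{div} g)=0$.

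One small inaccuracy worth cleaning up: your assertion that ``$\mathfrak{e}$ is the smallest effective divisor with $\ord_\Gamma(f_i)+\ord_\Gamma(\mathfrak{e})\ge 0$'' is not quite right as stated, because with an arbitrary choice of representatives $f_0,\dots,f_r$ the divisor $\mathfrak{e}$ need not be effective (if every $f_i$ vanishes along some $\Gamma$, the coefficient $\max_i\{-\ord_\Gamma(f_i)\}$ is negative). This does not affect your proof at all---your local computation that $(t_\Gamma^{m_\Gamma}f_0,\dots,t_\Gamma^{m_\Gamma}f_r)$ has all regular entries and at least one unit is valid for any sign of $m_\Gamma$, and you only ever use $\deg_\phi(\mathfrak{e})=h_{\PP^r(\bar{k}(V))}(P)$ together with the linear equivalence $f_P^{-1}(L)\sim\mathfrak{e}$. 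You can simply drop the word ``effective'' (or, if you prefer, first rescale the $f_i$ by a rational function to clear common divisorial zeros, which does not change $P$ and makes $\mathfrak{e}$ effective).
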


%In particular, for any non-zero rational function $f$ in $\bar{k}(V)$, we define the height of $f$ to be
%$$ h_{\bar{k}(V)}(f):=h_{\PP^1_{\bar{k}(V)}}([f:1])=\sum_{\G} \max\{\ord_{\G}(f),0\}\deg_\phi(\G). $$ If we denote $(f)$ to be the Weil divisor corresponds to $f$, and $(f)_0$, $(f)_\infty$ to be the zeros part and the poles part of $(f)$ respectively, then we obtain
%$$h_{\bar{k}(V)}(f)=\deg_\phi((f)_0)=\deg_\phi((f)_\infty), $$ where the last equality comes from the fact that $\deg_\phi((f))=0$. 
Let $E/k(V)$ be an elliptic curve over the function field $k(V)$ defined by a Weierstrass equation and $O$ be the zero element. Then this embedding of $E$ into $\PP^2_{k(V)}$ is induced by the very ample divisor $3(O)$. Let $h_E$ be the Weil height corresponds to this embedding, i.e. for any $P=[x:y:z] \in E(k(V))$, 
$$h_E(P)=h_{\PP^2(k(V))}([x:y:z]). $$  Since the divisor $3(O)$ is even, the canonical height $\hat{h}_E$ induced by $(O)$ is defined by
$$\hat{h}_{E}(P):=\frac{1}{3}\lim_{n \rightarrow \infty}\frac{h_{E}([n]P)}{n^2}. $$ 
This canonical height is a quadratic function on $E(\overline{k(V)})$ with some nice properties. Readers are invited to consult Lang \cite{LFD} for more details on height functions (Chapter 3, 4) and canonical heights on abelian varieties (Chapter 5).

%%%%%%%%%%%%%%%%%%%%%%%%%%%%%%%%%%%%%%%%%%%%%%%%%%%%%%%%%%%%%%%%%%%%%%
\section{Lemmas}  \label{lemma}
%%%%%%%%%%%%%%%%%%%%%%%%%%%%%%%%%%%%%%%%%%%%%%%%%%%%%%%%%%%%%%%%%%%%%%
Let $n \geq 2$ and $S_0, \ldots, S_n$ be the homogeneous coordinate functions of the projective space $\PP^n$ over $\CQ$. Let $K=\CQ(\PP^n)$ be the function field of $\PP^n$. If we denote $T_1:=\frac{S_1}{S_0}, \ldots, T_n:=\frac{S_n}{S_0}$, then $K=\CQ(T_1, \ldots, T_n)$ and the infinity hyperplane $H_\infty$ is the plane defined by $S_0=0$ . Let $E/K$ be an elliptic curve defined by the Weierstrass equation 
\begin{equation}\label{WE} Y^2Z=X^3+AXZ^2+BZ^3 \end{equation} with $A,B \in \CQ[T_1, \ldots, T_n]$ such that it is minimal with respect to all prime divisors in $\PP^n$ that are not $H_\infty$, i.e. $$0\leq \ord_D(A) <4 \qquad \text{or}\qquad  0\leq \ord_D(B)<6  $$  for all prime divisors $D \neq H_\infty$. Such a Weierstrass equation can always be obtained via change of variables. The discriminant 
$$\D_E=-16(4A^3+27B^2) $$ is a non-zero element in $\CQ[T_1, \ldots, T_n]$. %Also, we assume that $E/K$ is not split over $K$, i.e. $E$ is not $K$-isomorphic to $E_0 \times_{\CQ} K$ for any elliptic curve $E_0/\CQ$. This implies that $A$ and $B$ cannot be both constant and so does $\D_E$ (\cite{WWP}, Lemma 9).

For any smooth hypersurface (smooth irreducible codimension-one subvariety) $\G$ in $\PP^n$, the local ring at $\G$ is a discrete valuation ring $\Ocal_\G$ with maximal ideal $\frak{m}_\G$. We denote the reduction map $\Ocal_\G \longrightarrow \Ocal_\G/\frak{m}_\G= \CQ(\G)$ by $f \longmapsto f_\G$. Then for $\G \neq H_\infty$, the reduction of $E$ modulo $\G$, $E_\G$ is the cubic curve (possibly singular) over $\CQ(\G)$ defined by 
$$Y^2Z=X^3+A_\G XZ^2+B_\G Z^3,$$ as $A,B \in \Ocal_\G$ for all $\G \neq H_\infty$. Let 
$$E_\G(\CQ(\G))_{\text{ns}}:=\{ \text{ non-singular points of }E_\G(\CQ(\G))\} $$ and
$$E_0(K)=\{ P \in E(K) \ | \ P_\G \in E_\G(\CQ(\G))_{\text{ns}}\}, $$
then we have a group homomorphism (\cite{SAE} Chapter VII, Proposition 2.1. Note: the condition that $K$ is complete in Proposition 2.1 is only needed to prove the surjectivity of the reduction map, which is not needed here.)
\begin{align*}
E_0(K) &\longrightarrow E_\G(\CQ(\G))_{\text{ns}} \\
P=[x:y:z] &\longmapsto P_\G=[x_\G: y_\G: z_\G],
\end{align*} where at least one of $x,y,z \in \Ocal_\G$ is not in $\frak{m}_\G$. All the lemmas in this section are based on the setting mentioned above.

As we have seen from the previous section, the Weil height $h_E(P)$ of a point $P\in E(\overline{k}(V))$ is directly related to the rational map induced by $P$. So in order to study the relationship between $h_{E}(P)$ and $h_{E_\G}(P_\G)$, we look at the corresponding induced rational maps. In general, if we have a rational map $f : V \dashrightarrow W$ between projective varieties and $V$ is non-singular, the indeterminacy locus of $f$, which we denote as $I_f$, has codimension at least $2$ (\cite{SAT} Chapter III, Proposition 3.5b) and so we have the following isomorphism (\cite{HAG} Chapter II, Proposition 6.5b) of divisor class groups induced by the inclusion $j:U:= V\backslash I_f \longhookrightarrow V$:
\begin{align*}
j^*: \Pic(V) &\stackrel{\cong}{\longrightarrow} \Pic(U)\\
c &\longmapsto  c \cap U
\end{align*}with projective closure in $V$ as the inverse homomorphism. With this, the pull-back $f^{-1}$ of divisor class is well-defined in the following sense:

\begin{definition} Given a rational map $f : V \dashrightarrow W$ between projective varieties and $V$ is non-singular. Denote $I_f$ to be the indeterminacy locus of $f$ and $U:= V\backslash I_f$. Let $j: U \longhookrightarrow V$ be the inclusion map and $f|_{U}:U\longrightarrow W$ be the restriction of $f$ on $U$, which is a morphism between quasi-projective varieties. Then $f|_{U}^* : \Pic(W)  \longrightarrow \Pic(U)$ is well-defined and we define $f^{-1}:=(j^*)^{-1} \circ f|_{U}^* : \Pic(W) \longrightarrow \Pic(V)$, i.e. $f^{-1}$ is defined such that the following diagram commutes:
$$\xymatrix{
&\Pic(U)  
&\Pic(W) \ar@{->}[l]_{f|_{U}^*} \ar@{->}[dl]^{f^{-1}} \\
&\Pic(V)  \ar@{->}[u]^{j^*}_\cong 
}$$
\end{definition}

The following lemma serves as the first step in relating the Weil heights $h_{E}(P)$ and $h_{E_\G}(P_\G)$.

\begin{lemma}\label{Le1} For $n\geq 2$, let $f : \PP^n \dashrightarrow \PP^r$ be a rational map and $\G \stackrel{\iota}{\longhookrightarrow} \PP^n$ be a smooth hypersurface, where all the varieties are defined over an algebraically closed field $k$. Suppose $\G$ does not contain any codimension-two component of the indeterminacy locus of $f$, then 
$$(f \circ \iota)^{-1}(c)=\iota^*(f^{-1}(c)), $$ for any divisor class $c \in \Pic(\PP^r)$.
\end{lemma}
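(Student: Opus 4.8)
The plan is to compare $f^{-1}$ and $(f\circ\iota)^{-1}$ by restricting everything to the open locus where both maps are morphisms, and then using the isomorphism $\Pic(U)\cong\Pic(\PP^n)$ (and its analogue on $\G$) to descend the identity. Write $I=I_f$ for the indeterminacy locus of $f$ in $\PP^n$ and $U=\PP^n\setminus I$, so $f|_U:U\to\PP^r$ is a morphism. The key point to establish first is that the indeterminacy locus of $g:=f\circ\iota$ satisfies $I_g\subseteq I\cap\G$: wherever $f$ is defined along $\G$, the composite $f\circ\iota$ is certainly defined. Set $W=\G\setminus I_g$, $j:U\hookrightarrow\PP^n$ and $j':W\hookrightarrow\G$ the open inclusions. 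One then has a commutative square relating $\iota$, $j$, $j'$ and the restricted map $\iota|_W:W\to U$, and by functoriality of pullback of line bundles on this square one gets
\[
(\iota|_W)^*\circ(f|_U)^* \;=\; (g|_W)^* : \Pic(\PP^r)\longrightarrow\Pic(W).
\]

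**Where the hypothesis enters.** The hypothesis that $\G$ contains no codimension-two component of $I_f$ is exactly what is needed to guarantee that $I_g$ has codimension $\ge 2$ in $\G$, so that $(j')^*:\Pic(\G)\xrightarrow{\cong}\Pic(W)$ is an isomorphism and $g^{-1}=((j')^*)^{-1}\circ(g|_W)^*$ is well-defined in the sense of the Definition. Indeed $I\cap\G$ a priori has codimension $\ge 1$ in $\G$; its codimension-one components in $\G$ would have to be codimension-two components of $I$ contained in $\G$ (a codimension-one subvariety of $\G$ contained in $I$ lies in some irreducible component of $I$, which has codimension $\ge 2$ in $\PP^n$, hence codimension $\ge 1$ in $\G$, hence equals that component, forcing it to be codimension-two in $\PP^n$ and contained in $\G$) --- and these are precisely excluded by hypothesis. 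I would spell this dimension count out carefully, since it is the crux: without it $g^{-1}$ might not even be defined.

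**Descending the identity.** With both $f^{-1}$ and $g^{-1}$ defined, chase the diagram. We have $j^*\circ f^{-1}=(f|_U)^*$ and $(j')^*\circ g^{-1}=(g|_W)^*$ by definition. Also $(\iota|_W)^*\circ j^*=(j')^*\circ\iota^*$ because $j\circ(\iota|_W)=\iota\circ j':W\to\PP^n$ as maps of varieties. Then
\[
(j')^*\circ g^{-1}=(g|_W)^*=(\iota|_W)^*\circ(f|_U)^*=(\iota|_W)^*\circ j^*\circ f^{-1}=(j')^*\circ\iota^*\circ f^{-1},
\]
and since $(j')^*$ is injective (it is an isomorphism), we conclude $g^{-1}=\iota^*\circ f^{-1}$, i.e. $(f\circ\iota)^{-1}(c)=\iota^*(f^{-1}(c))$ for all $c\in\Pic(\PP^r)$, as claimed. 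The only genuine obstacle is the dimension-count in the previous paragraph; everything else is formal diagram-chasing once one is careful that the maps $j\circ(\iota|_W)$ and $\iota\circ j'$ literally agree on the open set $W$ and that $\G$ smooth guarantees the $\Pic$-restriction isomorphisms apply.
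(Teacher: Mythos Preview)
Your approach is essentially identical to the paper's: restrict to the open locus where $f$ is a morphism, use that removing a codimension-$\ge 2$ set induces an isomorphism on $\Pic$, and chase the resulting commutative diagram of pullbacks. Your dimension count for $I\cap\G$ is exactly the paper's argument for why $(j')^*$ is an isomorphism.

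There is one small technical slip worth tightening. You set $W=\G\setminus I_g$ and then assert a map $\iota|_W:W\to U$. But $I_g\subseteq I\cap\G$ need not be an equality (the restricted polynomials $f_i|_\G$ could acquire a common factor, so $g$ may extend across some points of $I\cap\G$), and at any point of $(I\cap\G)\setminus I_g\subseteq W$ the inclusion $\iota$ lands in $I_f$, not in $U$. So $\iota|_W$ need not have target $U$, and the square you want does not literally commute on $W$. The paper avoids this by working instead with $\G^o:=\G\setminus(I\cap\G)\subseteq W$; your own dimension count already shows $I\cap\G$ has codimension $\ge 2$ in $\G$, so $\Pic(\G)\cong\Pic(\G^o)$ as well, $\iota|_{\G^o}:\G^o\to U$ is honestly defined, and one checks (trivially, via a further restriction isomorphism $\Pic(W)\cong\Pic(\G^o)$) that computing $g^{-1}$ through $\G^o$ gives the same answer as through $W$. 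With that adjustment your argument is complete and matches the paper's proof line for line.
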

\begin{proof} Since $f$ is a rational map between smooth projective varieties, the indeterminacy locus $I_f$ has codimension at least $2$, i.e. each irreducible component of $I_f$ has dimension at most $n-2$.

Define the quasi-projective varieties $U^o:=\PP^n\backslash I_f$ and $\G^o:=\G\backslash I_f \cap \G$ and we have the following commutative diagram 

$$\xymatrix{
\G^o \ar@{^{(}->}[d]_{j_{\G^o}} \ar@{^{(}->}[r]^{\iota_{\G^o}}
&U^o \ar@{->}[r]^{f|_{U^o}} \ar@{^{(}->}[d]_{j_{U^o}}
&\PP^r \\
\G \ar@{^{(}->}[r]^{\iota_\G} 
&\PP^n \ar@{-->}[ur]_f}
$$
where all the $\iota$ and $j$ are inclusion maps and $f|_{U^o}$ is the restriction of $f$ on $U^o$. Since $\G$ does not contain any codimension-two component of $I_f$, the codimension of $I_f \cap \G$ in $\G$ is at least $2$ also. Thus, we have the following isomorphisms of divisor class groups induced by $j_{\G^o}$ and $j_{U^o}$:
\begin{align*} j^*_{U^o}: \Pic(\PP^n) &\stackrel{\cong}{\longrightarrow} \Pic(U^o)  &j^*_{\G^o}: \Pic(\G) &\stackrel{\cong}{\longrightarrow} \Pic(\G^o)\\
 c &\longmapsto c \cap U^o & D &\longmapsto D \cap \G^o,
\end{align*} with projective closure in the corresponding varieties as the inverse maps. With these isomorphisms and the above commutative diagram, we obtain the following commutative diagram of divisor class groups:

$$\xymatrix{
\Pic(\G^o)
&\Pic(U^o)  \ar@{->}[l]_{\iota_{\G^o}^*} 
&\Pic(\PP^r) \ar@{->}[l]_{f|_{U^o}^*} \ar@{->}[dl]^{f^{-1}} \\
\Pic(\G) \ar@{->}[u]^{j_{\G^o}^*}_\cong 
&\Pic(\PP^n)  \ar@{->}[u]^{j_{U^o}^*}_\cong \ar@{->}[l]_{\iota_\G^*} 
}$$

Pull-backs are functorial on the category of varieties (\cite{FIT} Chapter 2, Section 2.2, \cite{HAG} Appendix A, Theorem 1.1), so for any divisor class $c \in \Pic(\PP^r)$, we have 
$$\iota_{\G^o}^*(f|_{U^o}^*(c))=(f|_{U^o} \circ \iota_{\G^o})^*(c). $$ Then by taking $(j_{\G^o}^*)^{-1}$, the projective closure in $\G$ and tracing the commutative diagram, we get
\begin{align*}
\iota_{\G}^*(f^{-1}(c))&=(j_{\G^o}^*)^{-1}\left(\iota_{\G^o}^*(f|_{U^o}^*(c))\right) \\& = (j_{\G^o}^*)^{-1}\left((f|_{U^o} \circ \iota_{\G^o})^*(c)\right) \\ &= (f \circ \iota_\G)^{-1}(c), \end{align*} which is the statement of the lemma.
\end{proof}

\begin{lemma} \label{leh}Let $f_P : \PP^n \dashrightarrow \PP^2$ be the rational map induced by $P \in E(K)$ with indeterminacy locus $I_{f_P}$. If $\G \stackrel{\iota}{\longhookrightarrow} \PP^n$ is a smooth hypersurface such that $E_\G$ is non-singular and $\G$ does not contain any codimension-two component of $I_{f_P}$, then 
\begin{equation}\frac{h_{E_\G}(P_\G)}{\deg \G}= h_E(P). \label{E1}\end{equation}
\end{lemma}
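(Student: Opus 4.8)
The plan is to evaluate both Weil heights geometrically by means of Lang's Proposition~\ref{LH}, to recognize the rational map on $\G$ determined by $P_\G$ as the restriction of $f_P$ to $\G$, and then to derive the identity from Lemma~\ref{Le1} together with an elementary, B\'ezout-type computation of degrees of divisor classes on $\G$.

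First I would fix homogeneous polynomials $x,y,z\in\CQ[S_0,\ldots,S_n]$ of a common degree and with no common factor such that $P=[x:y:z]$; then $f_P=[x:y:z]$ and $I_{f_P}=\{x=y=z=0\}$. Since $E_\G$ is non-singular we have $E_0(K)=E(K)$, so $P_\G$ is defined, and since $\G$ is an irreducible hypersurface its defining polynomial cannot divide all of $x,y,z$, whence $\min\{\ord_\G x,\ord_\G y,\ord_\G z\}=0$. Thus coprime homogeneous representatives of $P$ reduce modulo $\G$ without rescaling, so $P_\G=[x_\G:y_\G:z_\G]\in E_\G(\CQ(\G))$ and the rational map $\G\dashrightarrow\PP^2$ determined by $P_\G$ is exactly $f_P\circ\iota$. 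Recalling from Section~\ref{HF} that the Weierstrass embedding of an elliptic curve over a function field is induced by the divisor $3(O)$, so that $h_E(P)=h_{\PP^2(K)}([x:y:z])$ and $h_{E_\G}(P_\G)=h_{\PP^2(\CQ(\G))}([x_\G:y_\G:z_\G])$, Proposition~\ref{LH} applied to $V=\PP^n$ with its standard embedding and to $V=\G$ with the embedding $\iota$ gives, with $[L]\in\Pic(\PP^2)$ the hyperplane class,
\[ h_E(P)=\deg_{\PP^n}\big(f_P^{-1}([L])\big),\qquad h_{E_\G}(P_\G)=\deg_{\iota}\big((f_P\circ\iota)^{-1}([L])\big), \]
where $f_P^{-1}$ and $(f_P\circ\iota)^{-1}$ are the pullbacks on divisor classes introduced in the Definition preceding Lemma~\ref{Le1} (for a generic hyperplane the divisor $f_P^{-1}(L)$ of Proposition~\ref{LH} represents the class $f_P^{-1}([L])$, so the two notions of degree agree).

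Next I would apply Lemma~\ref{Le1} with $f=f_P$ and $r=2$, which is legitimate precisely because $\G$ contains no codimension-two component of $I_{f_P}$; it gives $(f_P\circ\iota)^{-1}([L])=\iota^*\big(f_P^{-1}([L])\big)$ in $\Pic(\G)$. Since $\Pic(\PP^n)=\ZZ\cdot[H]$ for a hyperplane $H\subset\PP^n$ with $\deg_{\PP^n}[H]=1$, the first displayed equality forces $f_P^{-1}([L])=h_E(P)\cdot[H]$ in $\Pic(\PP^n)$. Pulling back along $\iota$, the class $\iota^*[H]$ is represented by a hyperplane section of $\G$, whose degree in the embedding $\iota$ equals $\deg\G$ by B\'ezout's theorem; extending linearly, $\deg_\iota\big(\iota^*(f_P^{-1}([L]))\big)=h_E(P)\cdot\deg\G$. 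Substituting into the second displayed equality yields $h_{E_\G}(P_\G)=h_E(P)\cdot\deg\G$, which is \eqref{E1}.

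The one genuinely delicate point I anticipate is the identification of the rational map determined by $P_\G$ with $f_P\circ\iota$: it rests on the remark that $\min\{\ord_\G x,\ord_\G y,\ord_\G z\}=0$ for coprime homogeneous representatives of $P$, so that passing to the reduction $P_\G$ does not alter the coordinate triple. Everything after that is a formal matter of combining Proposition~\ref{LH}, Lemma~\ref{Le1}, and the fact that $\Pic(\PP^n)$ is generated by the hyperplane class, so I would not expect any further geometric obstacle.
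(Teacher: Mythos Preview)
Your argument is correct and follows essentially the same route as the paper: identify the rational map determined by $P_\G$ with $f_P\circ\iota$, apply Proposition~\ref{LH} to both heights, invoke Lemma~\ref{Le1} to rewrite $(f_P\circ\iota)^{-1}([L])$ as $\iota^*(f_P^{-1}([L]))$, and finish with B\'ezout. The only cosmetic difference is that you pass through the explicit identification $f_P^{-1}([L])=h_E(P)\cdot[H]$ in $\Pic(\PP^n)=\ZZ\cdot[H]$ before applying B\'ezout, whereas the paper applies B\'ezout directly to the intersection $\G\cdot f_P^{-1}(L)$; these are the same computation.
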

\begin{proof} Let $g_{P_\G} : \G \dashrightarrow \PP^2$ be the rational map induced by $P_\G=[x_\G: y_\G: z_\G]$. Then $g_{P_\G}$ factors through
$$\xymatrix{
\G \ar@{-->}[r]^{g_{P_\G}} \ar@{^{(}->}[d]_{\iota_{\G}} 
&\PP^2  \\
\PP^n \ar@{-->}[ur]_{f_P} 
}$$
Let $L \in \Pic(\PP^2)$ be the divisor class of hyperplanes, then we have 
\begin{align*}
h_{E_\G}(P_\G)&= \deg g_{P_\G}^{-1}(L) & (\text{by Proposition \ref{LH}})\\
&=\deg (f_P \circ \iota_\G)^{-1}(L)\\
&= \deg \iota_\G^* (f_P^{-1}(L)) &(\text{by Lemma \ref{Le1}}).
\end{align*}
But the divisor class $\iota_\G^* (f_P^{-1}(L))$ is the divisor class of the intersection of $\G$ and $f_P^{-1}(L)$ (\cite{FIT} Chapter 8, Example 8.1.7 or \cite{HAG} Chapter II, Exercises 6.2). Then by B\'ezout's theorem, we get $\deg \iota_\G^* (f_P^{-1}(L)) = \deg \G \cdot \deg f_P^{-1}(L)$, which gives 
$$h_{E_\G}(P_\G)=\deg \G \cdot \deg f_P^{-1}(L)=\deg \G \cdot h_E(P) $$ by Proposition \ref{LH} again.
\end{proof}

In order to obtain Theorem A from Lemma \ref{leh}, we will replace $P$ in equation (\ref{E1}) by $[2^m]P$, then divide the resulting equation by $3 \cdot 2^{2m}$ and take the limit $m\rightarrow \infty$. This leads us to study the indeterminacy locus of $f_{[2^m]P}$ when $m$ goes to infinity. 

\begin{lemma} \label{DL} For $t \in \PP^n(\CQ)\backslash H_\infty$ such that the specialized point $P_t:=f_P(t)=[x(t):y(t):z(t)]$ is a non-singular point on the specialized (possibly singular) cubic curve $E_t$ defined by \begin{equation} \label{WEt}Y^2Z=X^3+A(t)XZ^2+B(t)Z^3, \end{equation} $([2]P)_t=f_{[2]P}(t)$ is also well-defined and $([2]P)_t=[2]P_t$.
\end{lemma}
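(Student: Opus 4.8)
The plan is to reduce everything to the explicit duplication formula on a Weierstrass cubic and track denominators under specialization. Let $P = [x:y:z] \in E(K)$ be written in homogeneous coordinates with $x,y,z \in \CQ[T_1,\ldots,T_n]$ having no common factor. The point $[2]P \in E(K)$ is given by the standard duplication formulas: there are polynomials $\xi, \eta, \zeta \in \CQ[T_1,\ldots,T_n]$, obtained by homogenizing the usual affine duplication map, such that $[2]P = [\xi : \eta : \zeta]$ as a point of $E(K)$. Explicitly one may take $\zeta = \psi_2^3$, $\xi = \psi_2 \cdot(\text{polynomial})$, etc., where $\psi_2 = 2y z$ (up to a unit) corresponds to the $2$-division polynomial; the key structural point is that these formulas are universal, i.e. they are polynomial identities in $A,B$ and the coordinates of $P$ valid on any Weierstrass cubic, including the specialized cubic $E_t$ of \eqref{WEt}. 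First I would record that $f_{[2]P}$ is, by definition, the rational map $\PP^n \dashrightarrow \PP^2$ determined by some choice of homogeneous coordinate representation of $[2]P$, and that at a point $t$ this is well-defined precisely when not all the representing polynomials vanish at $t$.

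Next I would invoke the hypothesis: $P_t = [x(t):y(t):z(t)]$ is a \emph{non-singular} point of $E_t$. On a (possibly singular) Weierstrass cubic the non-singular locus $E_{t,\mathrm{ns}}$ carries a group law given by exactly the same rational formulas as in the smooth case, and doubling $E_{t,\mathrm{ns}} \to E_{t,\mathrm{ns}}$ is a morphism (this is \cite{SAE} Chapter III, together with the group structure on the smooth locus). Therefore $[2]P_t$ is a well-defined point of $E_t(\CQ)$, computed by substituting $t$ into the same universal duplication formulas that compute $[2]P$. Because those formulas are literally the same polynomials, substituting $t$ and then doubling agrees with doubling and then substituting — \emph{provided} the coordinate vector $(\xi(t),\eta(t),\zeta(t))$ is not identically zero, so that it actually represents the point $[2]P_t \in \PP^2$. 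This is the crux: I must rule out that every homogenizing polynomial for $[2]P$ vanishes at $t$.

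The main obstacle, then, is the possible simultaneous vanishing at $t$ of all coordinates of the chosen representative of $[2]P$; equivalently, $t$ could a priori lie in the indeterminacy locus $I_{f_{[2]P}}$. To handle this I would argue as follows. If $P_t = O_t$ (the point at infinity on $E_t$), then $[2]P_t = O_t$ as well, and I would check directly from the homogenized formulas that the representative still specializes to the infinity point rather than to $(0,0,0)$ — concretely, the component that is a unit times a power of the "$z$-coordinate" will not vanish. If $P_t \neq O_t$, so $P_t$ is an affine non-singular point, then the affine duplication map $(x/z, y/z) \mapsto$ (new affine point) is regular at $P_t$ exactly because the only way it fails is when the tangent line to $E_t$ at $P_t$ is vertical, which (as $P_t$ is non-singular) happens only at the two-torsion points, where $[2]P_t = O_t$ and again the homogenized formulas give a non-vanishing "infinity" component. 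In all cases the specialized representative is a nonzero vector in $\CQ^3$, so $f_{[2]P}(t)$ is defined, and by the universality of the formulas it equals $[2]P_t$. I expect the bookkeeping of \emph{which} polynomial stays nonzero in each case — ordinary point, two-torsion point, point at infinity — to be the only genuinely fiddly part; everything else is the formal statement that a polynomial identity survives specialization.
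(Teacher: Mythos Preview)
Your overall strategy matches the paper's: write down universal duplication polynomials, specialize, and do a case analysis to check that the coordinate vector does not become identically zero. That is exactly the structure of the author's proof.

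There is, however, a genuine gap in your treatment of the case $P_t = O_t$. You write that ``the component that is a unit times a power of the $z$-coordinate will not vanish,'' but when $P_t = [0:1:0]$ one has $x(t) = z(t) = 0$, so every monomial involving $z$ \emph{does} vanish. Concretely, the single homogenized duplication formula you describe --- with third coordinate $8y^3z^3$, first coordinate a multiple of $yz$, and second coordinate a polynomial whose every term contains $x$ or $z$ --- specializes to $(0,0,0)$ at such a $t$. So a single universal formula is \emph{not} enough, and this is not just bookkeeping.

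The paper resolves this by writing down \emph{two} different polynomial representatives of $[2]P$: the naive one (formula \eqref{D1}), and a second one (formula \eqref{D2}) obtained by first substituting the Weierstrass relation $x^3 = y^2z - Axz^2 - Bz^3$ into \eqref{D1} and then cancelling a factor of $z^2$. This second representative has third coordinate $8y^3z$ and second coordinate $4y^2(7y^2-\cdots)-27(y^2-\cdots)^2-\cdots$, which at $(x,z)=(0,0)$ reduces to $28y^4-27y^4=y^4\neq 0$, giving $[0:1:0]$ as required. The two formulas together cover all cases: \eqref{D1} handles $y(t)=0$ (where non-singularity forces $3x(t)^2+A(t)z(t)^2\neq 0$), \eqref{D2} handles $z(t)=0$, and either works when $y(t)z(t)\neq 0$. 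This substitution-and-cancellation step is the one idea your sketch is missing; once you have it, the rest of your argument goes through.
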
   
\begin{proof} For $E/K$ defined by the Weierstrass equation $(\ref{WE})$ with $P=[x:y:z] \in E(K)$ where $x,y,z \in \CQ[S_0,\ldots,S_n]$, the doubling formula (\cite{SAT} Chapter IV, page 324) is
\begin{align}&[2]P=\notag \\*
&[2yz\left(\left(3x^2+Az^2\right)^2-2xz(2y)^2\right): \notag \\
&-\left(3x^2+Az^2\right)^3+z(2y)^2\left(8x^3+2Axz^2-Bz^3-y^2z\right): \notag \\
&8y^3z^3]. \label{D1} 
\end{align}
By substituting the relation $x^3=y^2z-Axz^2-Bz^3$ into the first two coordinates and then factor away the common factor $z^2$, we also have
\begin{align}&[2]P=\notag \\
&[2y\left(xy^2-3Ax^2z-9Bxz^2+A^2z^3\right): \notag \\
&4y^2\left(7y^2-6Axz-9Bz^2\right)-\left(27(y^2-Axz-Bz^2)^2+27Ax^4+9A^2x^2z^2+A^3z^4\right): \notag \\
&8y^3z]. \label{D2} 
\end{align}
Since by assumption $t\in\PP^n(\CQ)\backslash H_\infty$ such that $P_t=f_P(t)=[x(t):y(t):z(t)]$ is defined and is a non-singular point on the cubic curve $E_t$, so $y(t)$ and $z(t)$ cannot be both zero. It is clear that $f_{[2]P}$ is defined when both $y(t)$ and $z(t)$ are not zero. When $y(t)=0$, then $3x(t)^2+A(t)z(t)^2$ is not zero or otherwise $P_t$ is a singular point on $E_t$. So formula $(\ref{D1})$ gives $([2]P)_t=[0:1:0]$, which is not a surprise as $P_t=[x(t):0:z(t)]$ is always a 2-torsion point with the given Weierstrass equation (\ref{WEt}). When $z(t)=0$, then $x(t)=0$ and $y(t) \neq 0$, since $P_t$ is a point on $E_t$. Formula $(\ref{D2})$ gives $([2]P)_t=[0:y(t)^4:0]=[0:1:0]$. Thus the induced rational map $f_{[2]P}$ is defined at such $t$. Lastly, formulae $(\ref{D1})$ and $(\ref{D2})$ are the same (with $A,B$ replaced by $A(t)$ and $B(t)$) doubling formula on the cubic curve $E_t$, so $([2]P)_t=[2]P_t$. 
\end{proof}

The next lemma allows us to reduce to the case where the points $t \in \PP^n(\CQ)$ for which $P_t$ is a singular point on $E_t$ are of codimension at least $2$ in $\PP^n(\CQ)$.

\begin{lemma} \label{LNS} With the minimality of the Weierstrass equation that defines $E/K$, for any $P \in E(K)$, there exists a natural number $N$ such that $([N]P)_\G$ is not a singular point on $E_\G(\CQ(\G))$ for all smooth hypersurfaces $\G \neq H_\infty$. 
\end{lemma}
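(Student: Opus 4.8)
The plan is to show that $E_\G$ can fail to be nonsingular only for the finitely many $\G$ dividing the discriminant, that at each such $\G$ the local index $[E(K):E_0(K)]$ is finite, and then to take $N$ to be a common multiple of these finitely many indices. First I would observe that for a smooth hypersurface $\G\neq H_\infty$ the reduced curve $E_\G/\CQ(\G)$ is the Weierstrass cubic $Y^2Z=X^3+A_\G XZ^2+B_\G Z^3$, whose discriminant is $(\D_E)_\G\in\CQ(\G)$; since $\CQ(\G)$ has characteristic zero, $E_\G$ is singular if and only if $(\D_E)_\G=0$, i.e.\ if and only if $\ord_\G(\D_E)>0$. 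As $\D_E$ is a fixed nonzero element of $\CQ[T_1,\dots,T_n]$, it has only finitely many irreducible factors, so there are only finitely many smooth hypersurfaces $\G_1,\dots,\G_s$ (necessarily all $\neq H_\infty$) for which $E_{\G_i}$ is singular. For every other smooth hypersurface $\G\neq H_\infty$ the curve $E_\G$ is nonsingular, hence \emph{every} point of $E_\G(\CQ(\G))$ is nonsingular; in particular $([N]P)_\G$ is nonsingular for any $N\geq 1$, and there is nothing to prove for such $\G$.

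It remains to deal with the finitely many $\G_i$. Fix one of them: the local ring $\Ocal_{\G_i}$ is a discrete valuation ring with fraction field $K$ and residue field $\CQ(\G_i)$, and by hypothesis the Weierstrass equation (\ref{WE}) is minimal at $\G_i$. Here I would invoke the reduction theory of elliptic curves over a discrete valuation ring. Writing $E_0(K)$ for the subgroup of points of $E(K)$ with nonsingular reduction at $\G_i$, minimality identifies $E_0(K)$ with the group of $\Ocal_{\G_i}$-sections of the identity component of the N\'eron model of $E$ over $\Spec\Ocal_{\G_i}$, and $E(K)/E_0(K)$ then injects into the (finite) group of components of the special fibre; equivalently, passing to the completion of $\Ocal_{\G_i}$ — which changes neither the residue field nor the reduced curve $E_{\G_i}$ nor the property of a point reducing to a nonsingular point — one may apply \cite{SAE} Chapter VII, Theorem 6.1, the minimal equation staying minimal. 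Either way $c_i:=[E(K):E_0(K)]$ is finite (in fact $c_i\leq 4$ unless $E$ has multiplicative reduction of type $\mathrm{I}_m$ at $\G_i$, in which case $c_i\mid m=\ord_{\G_i}(\D_E)$, but only finiteness is needed). This is exactly the step where minimality is used: without it $E_0(K)$ need not be the identity-component sections of the N\'eron model and one cannot cite the classification cleanly.

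Finally I would take $N:=\lcm(c_1,\dots,c_s)$. Given any smooth hypersurface $\G\neq H_\infty$: if $\G=\G_i$ for some $i$ then $c_i\mid N$, so $[N]P\in E_0(K)$ and $([N]P)_\G$ is a nonsingular point of $E_\G(\CQ(\G))$ (this covers also the case $[N]P=O$, the point at infinity being smooth); and if $\G$ is none of the $\G_i$ then $E_\G$ is nonsingular, so again $([N]P)_\G$ is nonsingular. This proves the lemma. The only point that genuinely requires care is the finiteness of the local index $[E(K):E_0(K)]$ for the \emph{non-complete} discrete valuation ring $\Ocal_{\G_i}$; I expect to dispatch it exactly as indicated — either directly via N\'eron models over the Dedekind scheme $\Spec\Ocal_{\G_i}$ and finiteness of the component group, or by base-changing to the completion, where the classical statement applies verbatim and the passage back is harmless because the residue field, and hence the notion of nonsingular reduction, is unchanged.
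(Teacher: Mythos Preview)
Your proposal is correct and follows essentially the same route as the paper: reduce to the finitely many prime divisors $\G_1,\dots,\G_s$ of $\D_E$, use minimality at each $\G_i$ together with N\'eron model theory to conclude that $E(K)/E_0(K)$ is finite there, and take $N$ to be the least common multiple of the resulting local indices. The only difference is that where you carefully discuss the non-completeness of $\Ocal_{\G_i}$ and suggest either passing to the completion or arguing via the component group of the N\'eron model, the paper simply invokes \cite{SAT}, Chapter~IV, Corollary~9.2(d), which already gives the finiteness of $E(K)/E_0(K)$ directly over any discrete valuation ring with perfect residue field, so no completion step is needed.
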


\begin{proof} For any smooth hypersurface $\G$, its local ring $\Ocal_\G$ is a DVR with fraction field $K$ and residue field $\CQ(\G)$, which is a perfect field.  With the minimality assumption on the Weierstrass equation, $E/K$ is defined by a Weierstrass equation with coefficients in $\Ocal_\G$ and minimal with respect to the prime divisor $\G$. Thus, the smooth part of the Weierstrass model gives the N\'eron model of $E/K$ and the quotient group $E(K)/E_0(K)$ is finite (\cite{SAT} Chapter IV, Corollary 9.2d). So there exists $n_\G \in \NN$ such that $([n_\G]P)_\G$ is not a singular point on $E_\G(\CQ(\G))$. Since $E_\G/\CQ(\G)$ is singular if and only if $\G$ is a component of the discriminant locus and there are only finitely many such components $\G_1, \ldots, \G_\ell$. So $N:=\lcm\{n_{\G_1}, \dots, n_{\G_\ell}\}$ will satisfy the lemma.
\end{proof}

So far, the lemmas above allow us to control the indeterminacy of $f_{[2^m]P}$ over $\PP^n(\CQ)\backslash H_\infty$. Since $H_\infty$ is the pole of $A$ and $B$, the reduction mod $H_\infty$ of $E$ is not defined. To overcome this, we use the following change of variables in $K$ to obtain an elliptic curve $E'$ that is $K$-isomorphic to $E$, defined by a Weierstrass equation whose coefficients don't have pole at $H_\infty$:

Let $k:=\max\left\{ \left\lceil \frac{\deg A}{4}\right\rceil, \left\lceil \frac{\deg B}{6}\right\rceil \right\}$ and 
\begin{equation}\label{WE'} E'/K: \qquad  Y^2Z=X^3+\left(\frac{S_0}{S_1}\right)^{4k}AXZ^2+\left(\frac{S_0}{S_1}\right)^{6k}BZ^3,
\end{equation}
which is the elliptic curve obtained from $E/K$ by the group isomorphism 
\begin{align*} E/K &\stackrel{\approx}{\longrightarrow} E'/K \\
P=[x:y:z] &\longmapsto P'=[x':y':z']:=\left[\left(\frac{S_0}{S_1}\right)^{2k}x: \left(\frac{S_0}{S_1}\right)^{3k}y:z \right].
\end{align*} The choice of $k$ is to make sure the Weierstrass equation $(\ref{WE'})$ is still minimal with respect to all prime divisors except at the hyperplane $H'_\infty$ defined by $S_1=0$. With this notation, the next lemma relates the behavior of $f_P$ and $f_{P'}$ at $t \in H_\infty(\CQ)\backslash H'_\infty(\CQ)$.

\begin{lemma} \label{PL} Let $t=[0,1:*:\ldots:*] \in H_\infty(\CQ) \backslash H'_\infty(\CQ)$ such that the rational map $f_P$ is not defined at $t$. Then exactly one of the following statements is true:
\begin{itemize}
\item[a)] $f_{P'}$ is not defined at $t$,
\item[b)] $P'_t$ is a singular point on  $E'_t(\CQ)$,
\item[c)] $P'_t$ is a torsion point of order $2$ on $E'_t(\CQ)$.
\end{itemize}
\end{lemma}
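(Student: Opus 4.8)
The plan is to track, coordinate by coordinate, what goes wrong when $f_P$ fails to be defined at a point $t \in H_\infty(\CQ) \setminus H'_\infty(\CQ)$ and compare it with the behavior of $f_{P'}$ at the same point. The starting observation is that the isomorphism $E \to E'$ is given explicitly by scaling: $P' = [x':y':z']$ where $x' = (S_0/S_1)^{2k}x$, $y' = (S_0/S_1)^{3k}y$, $z' = z$. Since $t \notin H'_\infty$, the function $S_0/S_1$ is a unit in the local ring $\Ocal_t$ (or rather, vanishes to a definite order $\geq 1$ because $t \in H_\infty$), so the ratios of the new coordinates differ from the ratios of the old ones by known powers of the function $S_0/S_1$ which vanishes exactly to order $1$ along $H_\infty$. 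First I would homogenize: write $P = [x:y:z]$ with $x,y,z \in \CQ[S_0,\dots,S_n]$ having no common factor, so that $f_P$ is undefined at $t$ precisely when $x(t) = y(t) = z(t) = 0$. After clearing denominators in the formula for $P'$, one gets a representative $[S_0^{2k}x : S_0^{3k}y : S_1^{3k}z]$ (up to an overall power of $S_1$); I would then factor out the largest power $S_0^e$ dividing all three new coordinates and examine the reductions mod $H_\infty$, i.e. evaluate at $t$ where $S_0(t) = 0$.

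The key case analysis then proceeds according to how fast $x,y,z$ vanish along $H_\infty$ near $t$, i.e. the values $\ord_{H_\infty}(x), \ord_{H_\infty}(y), \ord_{H_\infty}(z)$ localized at $t$ — call them $a,b,c$. Because the Weierstrass relation $y^2 z = x^3 + Axz^2 + Bz^3$ holds and $A,B$ have poles of controlled order at $H_\infty$, these orders are constrained (this is exactly the standard Kodaira-type analysis of the fiber at $H_\infty$). The point is: when we scale by $(S_0/S_1)^{2k}, (S_0/S_1)^{3k}$, whether the scaled coordinates still have a common zero at $t$, and if not, what point of $E'_t$ they define, is governed by comparing $a,b,c$ against $2k, 3k, 0$. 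If after scaling and removing the common $S_0$-factor all three coordinates still vanish at $t$, we are in case (a). Otherwise $f_{P'}(t)$ is a well-defined point of the cubic $E'_t$; I would show that because $f_P$ was \emph{not} defined at $t$ (so genuine cancellation/vanishing happened), this limiting point is forced either to lie at the singular point of $E'_t$ — case (b) — or, when it is a smooth point, to have $z'$-coordinate or $y'$-coordinate forced to be one of the special values that make it a $2$-torsion point, i.e. a point with $y' = 0$ (a root of the cubic) or the point at infinity reached "non-transversally"; this gives case (c). The mutual exclusivity is automatic: (a) says $f_{P'}$ is undefined, which is incompatible with (b) and (c); and a singular point is never a smooth $2$-torsion point, so (b) and (c) are incompatible.

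The main obstacle I expect is the bookkeeping in the case where $f_{P'}$ \emph{is} defined at $t$: one must pin down precisely which of the three coordinates of $P'_t$ survives (is nonzero) and then read off from the explicit cubic $E'_t : Y^2 Z = X^3 + (S_0/S_1)^{4k}(t) A(t) XZ^2 + (S_0/S_1)^{6k}(t) B(t) Z^3$ — noting that $(S_0/S_1)(t) = 0$ since $t \in H_\infty$, so $E'_t$ degenerates to $Y^2 Z = X^3$ unless $a,b,c$ conspire to keep the scaled $A,B$ terms from vanishing. That degenerate cubic $Y^2Z = X^3$ has its unique singular point at $[0:0:1]$, and its smooth $2$-torsion is only the point at infinity $[0:1:0]$; so the trichotomy really comes down to: after scaling, does $P'_t$ land at $[0:0:1]$ (case b), at $[0:1:0]$ (case c), or do the coordinates all still vanish (case a)? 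I would organize the proof by splitting on $\min(a + 2k, b + 3k, c)$ versus which index achieves it, using the Weierstrass relation to eliminate impossible combinations, and checking — this is where Lemma \ref{DL}-style substitution of the Weierstrass relation is useful — that the putative limit is consistent. Throughout I will use that $t \notin H'_\infty$ guarantees $S_1(t) \neq 0$ so that dividing by powers of $S_1$ is harmless, which is the only role that hypothesis plays.
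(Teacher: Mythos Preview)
Your setup is exactly the paper's: write $P=[x:y:z]$ with $x,y,z\in\CQ[S_0,\dots,S_n]$ coprime, note that $f_P$ is undefined at $t$ iff $x(t)=y(t)=z(t)=0$, pass to $P'=[S_0^{2k}S_1^kx:S_0^{3k}y:S_1^{3k}z]$, and strip common factors. But your endgame has two genuine errors.

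First, $E'_t$ need not be the cuspidal cubic $Y^2Z=X^3$. The coefficients $(S_0/S_1)^{4k}A$ and $(S_0/S_1)^{6k}B$ are regular along $H_\infty$ by choice of $k$, but they are not forced to vanish there (they are nonzero when $4k=\deg A$ or $6k=\deg B$). So $E'_t$ is a general Weierstrass cubic, possibly smooth; you cannot read the answer off the special fiber $Y^2Z=X^3$.

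Second, $[0:1:0]$ is the identity $O$, not a point of order~$2$, so your proposed trichotomy ``undefined / $[0:0:1]$ / $[0:1:0]$'' is not the statement of the lemma. The correct key observation---and the whole content of the paper's proof---is that after stripping common factors the $Y$-coordinate of $P'$ \emph{always} vanishes at $t$. Once $P'_t=[\a:0:\b]$, it is automatically either singular (if $\a/\b$ is a multiple root of the cubic) or of exact order~$2$; the identity never occurs.

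The paper's case split is simpler than your $\min(a+2k,b+3k,c)$ analysis and needs no Weierstrass-relation constraints. One of $x,y,z$ is coprime to $S_0$. If it is $z$, the only possible common factor is a power of $S_1$, and since $S_1(t)\neq0$ while $x(t)=y(t)=z(t)=0$, all three scaled coordinates still vanish at $t$: case~(a). If it is $y$, then after removing the common $S_0^eS_1^f$ the middle coordinate becomes $S_0^{3k-e}(y/S_1^f)$, which vanishes at $t$ either because $e<3k$ (so an $S_0$ factor remains) or because $e=3k$ and $y(t)=0$. If it is $x$, the common $S_0$-power is at most $S_0^{2k}$, so the middle coordinate retains at least $S_0^k$ and again vanishes at $t$. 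In the latter two cases $P'_t$ (if defined) has $Y$-coordinate $0$, giving (b) or (c).
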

\begin{proof} If we write $P=[x:y:z] \in E(K)$ where the homogeneous polynomials $x,y,z \in \CQ[S_0, \ldots, S_n]$ have no common irreducible factor, then the induced rational map $f_P$ is not defined at $t \in \PP^n(\CQ)$ if and only if $x(t)=y(t)=z(t)=0$, because $f_P$ maps from $\PP^n$ and so there's no way to alter it. The corresponding point $P'$ is $[S_0^{2k}S_1^kx: S_0^{3k}y: S_1^{3k}z]$. Since at least one of $x,y,z$ has no $S_0$ as its factor, we consider all possible cases:

Case 1: $z$ doesn't have $S_0$ as its factor. Then the only possible irreducible common factor of the homogeneous polynomials $S_0^{2k}S_1^kx$, $S_0^{3k}y$, $S_1^{3k}z$ is $S_1$ and even if we factor out this common factor, the induced rational map $f_{P'}$ is still not defined at $t=[0:1:*:\ldots:*]$.  

Case 2: $y$  doesn't have $S_0$ as its factor. In this case, the only possible common factor of the homogeneous polynomials $S_0^{2k}S_1^kx, S_0^{3k}y, S_1^{3k}z$ are $S_1$ and $S_0$. After removing the common factors, let $P'=[x'':y'':z'']$. Since $y(t)=0$ and the only possible factor we factor away from $y$ in $ S_0^{3k}y$ is $S_1$, thus $y''(t)=0$. Also, $t=[0:1:*:\ldots:*] \in H_\infty(\CQ) \backslash H'_\infty(\CQ)$ by assumption. If $P'_t$ is well-defined, then it is a point on the cubic curve $E_t'(\CQ)$ with zero $Y$-coordinate, which implies $P'_t$ is either a singular point or a torsion point of order $2$ on $E'_t(\CQ)$.

Case 3: $x$  doesn't have $S_0$ as its factor. The highest power of $S_0$ that can be a common factor of the homogeneous polynomials $S_0^{2k}S_1^kx, S_0^{3k}y, S_1^{3k}z$ is $S_0^{2k}$.  After removing the common factors, let $P'=[x'':y'':z'']$, then we see that $y''$ always has $S_0^k$ as a factor. Thus $y''(t)=0$ and the argument follows as Case 2. 
\end{proof}

%%%%%%%%%%%%%%%%%%%%%%%%%%%%%%%%%%%%%%%%%%%%%%%%%%%%%%%%%%%%%%%%%%%%%%
\section{Proof of Theorem A} \label{P}
%%%%%%%%%%%%%%%%%%%%%%%%%%%%%%%%%%%%%%%%%%%%%%%%%%%%%%%%%%%%%%%%%%%%%%
Now, we put ourselves in the setting described in Section \ref{lemma}. From Lemma \ref{leh}, if $\G$ is a smooth hypersurface that does not contain any codimension-two component of the indeterminacy locus of $f_{[2^m]P}$ for infinitely many $m$, then equality (\ref{E1}) holds for these $[2^m]P$ and thus dividing equation (\ref{E1}) by $3 \cdot 2^{2m}$ and then taking the  limit $m\rightarrow \infty$ over all $m$ such that equality (\ref{E1}) holds will give us the equality in Theorem A. A priori, such $\G$ might have to avoid containing infinitely many codimension-two subvarieties in $\PP^n$ corresponding to the infinitely many rational maps $f_{[2^m]P}$, we will prove that this is not a problem. In fact, we will prove that besides finitely many codimension-two subvarieties in $\PP^n$ that might be common components of the indeterminacy locus of all $f_{[2^m]P}$, $m \in \NN$, other  codimension-two components of the indeterminacy locus of a particular $f_{[2^m]P}$ will never reappear as components of the indeterminacy locus of  $f_{[2^\ell]P}$ for any $\ell > m$.

Since the canonical height is a quadratic form, we may replace $P$ by $[N]P$ in Theorem A. Given any $P \in E(K)$, we replace $P$ by $[N]P$ as in Lemma $\ref{LNS}$ to assure that $P_\G$ is not a singular point of $E_\G$ for all smooth hypersurfaces $\G \neq H_\infty$. So if we let $S_P$ be the set of all $t \in \PP^n(\CQ)\backslash H_\infty(\CQ)$ such that the specialized point $P_t$ is a singular point on the specialized fiber $E_t$, then $S_P$ is an algebraic set (since singularity is an algebraic property) of codimension at least two in $\PP^n\backslash H_\infty$. In addition, we note that $S_P$ is contained in the discriminant locus $\D_E=0$. Similarly, we may assume the same for the corresponding point $P'$ on the isomorphic curve $E'$ defined by equation $(\ref{WE'})$, i.e. $P'_\G$ is not a singular point of $E'_\G$  for all smooth hypersurfaces $\G \neq H'_\infty$ and $S_{P'}$ has codimension at least $2$ in $\PP^n(\CQ)\backslash H'_\infty(\CQ)$.

As before, let $I_{f_P}$ be the indeterminacy locus of the rational map $f_P$ induced by $P$. Then for all $t \in \PP^n(\CQ)\backslash H_\infty(\CQ)$ that are not in $I_{f_P} \cup S_P$, $f_P(t)=P_t$ is well-defined and is not a singular point on the cubic curve $E_t$. By lemma \ref{DL} and induction on $m$, $f_{[2^m]P}(t)=\left( [2^m]P\right)_t=[2^m]P_t$ is well-defined for all $m \in \NN$. Thus, $$I_{f_{[2^m]P}} \cap \left(\PP^n(\CQ)\backslash H_\infty(\CQ) \right) \subset I_{f_P}\cup S_P, \qquad\text{ for all }m \in \NN.$$ The exact same argument works for $P'$ and so we also have $$I_{f_{[2^m]P'}} \cap \left(\PP^n(\CQ)\backslash H'_\infty(\CQ) \right) \subset I_{f_{P'}}\cup S_{P'}\qquad\text{ for all } m \in \NN.$$

For $$t=[0:1:*: \dots:*] \in \left(I_{f_{[2^m]P}} \cap H_\infty(\CQ)\right) \backslash H'_\infty(\CQ),$$ if $t \not \in  I_{f_{P'}}\cup S_{P'}$, then lemma \ref{DL} says that $f_{[2^m]P'}(t)=([2^m]P')_t$ is defined and non-singular on $E'_t$ because $([2^m]P')_t=[2^m]P'_t$. In particular, lemma \ref{PL} implies that in fact $[2^m]P'$ is a point of order $2$ on $E'_t$. Thus, for $\ell > m$, $([2^\ell]P')_t=[2^\ell]P'_t=[2^{\ell-m}]\left([2^m]P'_t\right)=O_{E'_t}$, which implies that $t$ is not in $I_{f_{[2^\ell]P}}$ by the contrapositive statement of lemma \ref{PL}. Notice that we are left with $t \in I_{f_{[2^m]P}} \cap H_\infty(\CQ)\cap H'_\infty(\CQ)$, which is an algebraic set of codimension at least $2$ in $\PP^n(\CQ)$. To summarize, we have shown that given $P\in E(K)$, by replacing $P$ by $[N]P$ as in Lemma $\ref{LNS}$ if necessary, so that $P_\G$ and $P'_\G$ are not singular points of $E_\G$ and $E'_\G$ respectively for all smooth hypersurface $\G$, for any $m \in \NN$, if 
$$
  t \in I_{f_{[2^m]P}}
  \quad\text{and}\quad
  t \notin I_{f_P} \cup I_{f_{P'}} \cup S_P \cup S_{P'} \cup (H_\infty(\CQ)\cap H'_\infty(\CQ)),
$$
then for any $\ell>m$, $t \not \in I_{f_{[2^\ell]P}}$. In particular, if we let
$$B_P:=\left\{ H_\infty\cap H'_\infty, \ \text{dimension-$(n-2)$ components of }I_{f_P}\cup I_{f_{P'}}\cup S_P\cup S_{P'}   \right\}, $$then for any $m \in \NN$, if $C$ is a dimension-$(n-2)$ component of $I_{f_{[2^m]P}}$ that is not in $B_P$, then $C$ is not a component of  $I_{f_{[2^\ell]P}}$ for all $\ell > m$. 

To complete the proof for Theorem A, we consider any smooth hypersurface $\G \subset \PP^n$ that is not a component of the discriminant locus, so $E_\G$ is an elliptic curve over $\CQ(\G)$. Furthermore, if $\G$ does not contain any subvariety in $B_P$, then we have just shown that $\G$ does not contain any codimension-two component of $I_{f_{[2^m]P}}$ for all big enough $m \in \NN$. So by lemma \ref{leh}, we have $$\frac{h_{E_\G}([2^m]P_\G)}{\deg \G}=\frac{h_{E_\G}(\left([2^m]P\right)_\G)}{\deg \G}= h_E([2^m]P) $$ for all big enough $m \in \NN$. Dividing this equation by $3 \cdot 2^{2m}$ and then taking the limit $m\rightarrow \infty$ will give us Theorem A.

%%%%%%%%%%%%%%%%%%%%%%%%%%%%%%%%%%%%%%%%%%%%%%%%%%%%%%%%%%%%%%%%%%%%%%
\section{Injectivity of the specialization map} 
%%%%%%%%%%%%%%%%%%%%%%%%%%%%%%%%%%%%%%%%%%%%%%%%%%%%%%%%%%%%%%%%%%%%%%
We continue to use all the notation that we defined in section \ref{lemma}. $E/K$ is the generic fiber of an abelian scheme $\pi : \Ecal \longrightarrow U$, for some Zariski open dense subset $U \subset \PP^n$. Then for all $t \in U(\CQ)$, the specialization map
\begin{align*}
\s_t : E(K) &\longrightarrow E_t(\CQ) \\
P &\longmapsto P_t
\end{align*} is a group homomorphism (\cite{SLM} Chapter 11, page 152). The following theorem says that there are infinitely many smooth curves $C \subset \PP^n$ such that if we restrict our attention to those specialization maps $\s_t$ correspond to $t \in C(\CQ)$, then most of the $\s_t$ are injective off of set of bounded height:

\begin{theoremB} Given any integer $d \geq 1$, there exist infinitely many smooth curves $C/\CQ$ of degree $d$ in $\PP^n$ that do not lie in the complement of $U$ and such that the set 
$$\{ t \in C(\CQ) \cap U(\CQ) \ | \ \s_t \text{ is not injective }\} $$
is a set of bounded height. Furthermore, the union of all such curves is Zariski dense in $\PP^n$.
\end{theoremB}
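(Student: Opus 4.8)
The plan is to derive Theorem B from Theorem A together with Silverman's Theorem C (\cite{SHS}), the latter applied not to the whole family $\Ecal\to U$ but to its restriction to a suitably chosen curve. First, by the Lang--N\'eron theorem $E(K)$ is finitely generated unless the $\CQ$-trace of $E$ is nontrivial; in that exceptional case $E$ is isogenous over $K$ to a constant elliptic curve, and then $\s_t$ is injective off a finite subset of every curve for essentially formal reasons, so Theorem B is immediate. Assume henceforth that $E(K)$ is finitely generated, and fix generators $P_1,\dots,P_r$ of $E(K)$ modulo torsion. The heart of the matter is to produce infinitely many smooth curves $C\subset\PP^n$ of degree $d$, meeting $U$ and with Zariski-dense union, such that the restriction homomorphism $\rho_C\colon E(K)\to E_C(\CQ(C))$ is injective, where $E_C/\CQ(C)$ denotes the generic fiber of $\Ecal|_{C\cap U}$.

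Granting such $C$, Theorem B follows readily. Indeed $\Ecal|_{C\cap U}\to C\cap U$ is an abelian scheme over an open subset of the smooth projective curve $C$, and for every $t\in C(\CQ)\cap U(\CQ)$ one has $\s_t=\tau_t\circ\rho_C$, where $\tau_t\colon E_C(\CQ(C))\to E_t(\CQ)$ is the specialization map of this one-parameter family. By Silverman's Theorem C the set of $t\in C(\CQ)$ for which $\tau_t$ is not injective has bounded height; as $\rho_C$ is injective, the set of $t\in C(\CQ)\cap U(\CQ)$ for which $\s_t$ is not injective is contained in it, and so also has bounded height.

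To construct the curves I reduce to the case of a plane curve by iterated generic linear sections. Choose a generic flag of linear subspaces $\PP^n\supset H^{(n-1)}\supset\dots\supset H^{(2)}$ with $H^{(m)}\cong\PP^m$, and write $P^{(m)}$ for the reduction of a point $P\in E(K)$ to $H^{(m)}$. For a generic flag, $H^{(n-1)}$ contains none of the finitely many codimension-two subvarieties lying in $\bigcup_i B_{P_i}\cup\bigcup_{i\le j}B_{P_i+P_j}$ (the $B$'s being the sets furnished by Theorem A), is not contained in the discriminant locus, and $E_{H^{(n-1)}}/\CQ(H^{(n-1)})$ is an elliptic curve given, after a linear change of coordinates, by a Weierstrass equation to which Theorem A applies --- minimality away from the hyperplane at infinity is inherited because the order of vanishing of $A$, respectively $B$, along a generic hyperplane section of a component of $\{A=0\}$, respectively $\{B=0\}$, equals its order along that component. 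Theorem A with $\G=H^{(n-1)}$, of degree one, gives $\hat h_{E_{H^{(n-1)}}}(P^{(n-1)})=\hat h_E(P)$ for $P$ among the $P_i$ and the $P_i+P_j$; iterating down the flag --- Theorem A being applied inside each successive projective space, and using transitivity of the reduction maps --- yields $\hat h_{E_{H^{(2)}}}(P^{(2)})=\hat h_E(P)$ for those $P$, hence, by quadraticity of $\hat h$, for every $P\in E(K)$. Finally choose inside the plane $H^{(2)}$ a smooth curve $C$ of degree $d$ that meets $U$ (so that $E_C$ is non-singular), avoids the finitely many points of the set $B$ attached by Theorem A to $E_{H^{(2)}}$, and is contained in none of the finitely many loci where a nonzero torsion section of $\Ecal$ meets the zero section; then Theorem A in $H^{(2)}\cong\PP^2$ gives $\hat h_{E_C}(P_C)=d\cdot\hat h_{E_{H^{(2)}}}(P^{(2)})=d\cdot\hat h_E(P)$ for all $P\in E(K)$. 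Since $\hat h_E$ is positive definite on $E(K)\otimes\RR$, a non-torsion $P$ has $P_C$ non-torsion, so $\ker\rho_C$ is torsion; the remaining conditions imposed on $C$ make $\rho_C$ injective on torsion as well. Letting the flag and the plane curve vary gives a positive-dimensional, hence infinite, family of such curves $C$, and as a generic point of $\PP^n$ lies on one of them, their union is Zariski dense in $\PP^n$.

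The main obstacle is the bookkeeping of the last step: one must check that the successive generic linear sections and the final plane curve can be chosen so as to satisfy all the required genericity conditions at once --- avoiding the various sets $B$, preserving the minimality and the nonsingularity of every intermediate Weierstrass model (so that Theorem A keeps applying), keeping all reduction maps defined and compatible, remaining injective on torsion, and meeting $U$ --- and that Silverman's Theorem C genuinely applies to the family $\Ecal|_{C\cap U}$. At each stage only finitely many proper closed conditions are imposed, so this is not deep; but verifying that generic linear sections preserve exactly the hypotheses of Theorem A, and that the height identity propagates along the whole flag, does require some care.
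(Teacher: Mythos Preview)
Your proposal is correct and follows essentially the same route as the paper: reduce by iterated generic hyperplane sections, using Theorem~A at each step to preserve canonical heights, down to a plane $\PP^2$, then choose a degree-$d$ curve there and apply Silverman's result for one-parameter families. The only difference is cosmetic---the paper tracks the nonvanishing of the Gram determinant $\det(\langle P^i,P^j\rangle)$ and invokes Silverman's Theorem~B together with continuity of the determinant, whereas you track the canonical heights of the individual generators and invoke Silverman's Theorem~C directly.
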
 

\begin{proof}
If the $K/\CQ$-trace of $E$ (\cite{LFD}, Chapter 6) is non-zero, then by the constraint of dimension, the $K/\CQ$-trace of $E$ is a dimensional one abelian variety, i.e. $E$ is $K$-isomorphic to $E_0 \times_{\CQ} K$ for an elliptic curve $E_0/\CQ$. So there is a Zariski dense open subset $U^0 \subset \PP^n$ such that for all $t \in U^0(\CQ)$, the fibers $E_t$ are isomorphic to $E_0$, thus the specialization maps for $\s_t: E_0 \times_{\CQ} K \longrightarrow E_t$  are injective for all $t \in U^0(\CQ)$. So in particular, any curve $C \subset \PP^n$ of degree $d$ that does not lie in the complement of $U^0$ will satisfies Theorem B and their union is $\PP^n$.

We now suppose that the $K/\CQ$-trace of $E$ is zero, which implies that $E(K)$ is finitely generated, a result due to Lang-N\'eron (\cite{LFD}, Chapter 6, Theorem 2). For any non-zero $P=[x:y:z] \in E(K)$, the algebraic set
$$ \{ t \in U(\CQ)\ | \ [x(t):y(t):z(t)]=\s_t(P)=O=[0:1:0] \}$$ has dimension at most $n-1$. Since the subgroup of torsion points $E(K)_{\text{tors}}$ is finite, so we may shrink $U$ by discarding finitely many subvarieties of dimension at most $n-1$ and assume that 
$$E(K)_{\text{tors}} \longrightarrow E_t(\CQ) $$
is injective for all $t \in  U(\CQ)$. On the other hand, We will use the canonical height pairing to prove that the specialization map is injective on the free part of $E(K)$. The canonical height pairing on $E(K)$ is 
$$\< \cdot  , \cdot \>_E : E(K) \times E(K) \longrightarrow \RR $$
$$\<P, Q\>_E \longmapsto \frac{1}{2}\left(\hat{h}_{E}(P+Q)-\hat{h}_{E}(P)-\hat{h}_{E}(Q) \right).$$
Likewise, for any smooth hypersurface $\G \subset \PP^n$ such that $E_\G$ is non-singular, the canonical height pairing on $E_{\G}(\CQ(\G))$ is
$$\< \cdot  , \cdot \>_{E_{\G}} : E_{\G}(\CQ(\G)) \times E_{\G}(\CQ(\G)) \longrightarrow \RR $$
$$\<P_{\G}, Q_{\G}\>_{E_{\G}} \longmapsto \frac{1}{2}\left(\hat{h}_{E_{\G}}(P_{\G}+Q_{\G})-\hat{h}_{E_{\G}}(P_{\G})-\hat{h}_{E_{\G}}(Q_{\G}) \right).$$
It is a standard result that these pairings are bilinear on the corresponding abelian groups. However, it is more subtle when it comes to positive definiteness on the free part of the abelian group. In general, $\< \cdot  , \cdot \>_E$ is positive definite on the quotient of $E(K)$ by the subgroup generated by $E(K)_\text{tors}$ and  $K/\CQ\text{-trace of }E(K)$ (\cite{LFD}, Chapter 6, Theorem 5.4). Since we assume that the $K/\CQ$-trace of $E$ is zero, it follows that $\< \cdot  , \cdot \>_E$ is a positive definite bilinear form on $E(K)/E(K)_\text{tors}$. Let $P^1, \ldots, P^r$ be a set of generators for the free part of $E(K)$. The positive definiteness of the canonical height pairing on $E(K)/E(K)_\text{tors}$ implies that
$$ \det \left(\<P^i, P^j\>_E\right)_{1\leq i,j\leq r} \neq 0.$$ Let $B_E$ be the union of all $B_{P^i}$'s as in Theorem A, then $B_E$ is a finite set of codimension-two subvarieties such that for any smooth hypersurface $\G \neq H_\infty$ that does not contain any subvariety in $B_E$ and is not a component of the discriminant locus of $E$ (so that $E_\G$ is non-singular), then the equality in Theorem A holds for $P^1, \ldots, P^r$. By multilinearity of determinant, we have
\begin{align*}   \frac{\det \left( \<P^i_\G, P^j_\G\>_{E_\G} \right)_{1\leq i,j\leq r}}{(\deg \G)^r} &=\det\left( \frac{\<P^i_\G, P^j_\G\>_{E_\G}}{\deg\G} \right)_{1\leq i,j\leq r} \\
&=\det \left(\<P^i, P^j\>_E\right)_{1\leq i,j\leq r}\\
&\neq 0.
\end{align*}

On the other hand, lemma \ref{LNS} implies that there exist $N_1, \ldots, N_r$ such that the locus for which the specialization of any one of $[N_1]P^1, \ldots, [N_r]P^r$ is a singular point is of dimension at most $n-2$. So this locus contains only a finite number of subvarieties of dimension $n-2$ and we let $S_E$ be the set of all these codimension-two subvarieties. If $\PP^{n-1}\cong\G \subset \PP^n$ is a hyperplane such that $E_\G$ is non-singular, we can view $E_\G$ as as an elliptic curve defined over $\CQ(\PP^{n-1})$. Furthermore, if $\G$ does not contain any of the subvarieties in $S_E$, then lemma \ref{LNS} holds for $P^1_\G, \ldots, P^r_\G$ without the assumption that the reduced Weierstrass equation of $E_\G$ is minimal.

To summarize, if $\G \neq H_\infty$ is a hyperplane not contained in the complement of $U$, does not contain subvarieties in $B_E \cup S_E$ and is not a component of the discriminant locus of $E$, then \begin{equation}\det \left( \<P^i_\G, P^j_\G\>_{E_\G} \right)_{1\leq i,j\leq r} \neq 0 \label{LIG}\end{equation} and lemma \ref{LNS} holds for $P^1_\G, \ldots, P^r_\G$. The set of such hyperplanes $\G \subset \PP^n$ is Zariski dense in the dual variety ${\PP^n}^*$ (the set of hyperplanes in $\PP^n$) because the conditions of not containing a finite number of subvarieties and not equaling any one of a certain finite number of hyperplanes are open conditions. Thus, there are infinitely many hyperplanes $\G$ that allow us to reduce the dimension from $\PP^n$ to $\PP^{n-1}$. As mentioned before, although we may not have the minimality of Weierstrass equation in the $\PP^{n-1}$ case, but we do know that lemma \ref{LNS} holds for $P^1_\G, \ldots, P^r_\G$, which is enough for Theorem A to hold true for $P^1_\G, \ldots, P^r_\G$ on $E_\G(\CQ(\G))$ and allow us to apply inductively this reduction on the dimension of the projective space and eventually reduce to the case $\PP^2$, i.e. we may assume $E$ is an elliptic curve defined over $K=\CQ(\PP^2)$ with $P^1, \ldots, P^r \in E(K)$ such that $\det \left( \<P^i, P^j\>_{E} \right)_{1\leq i,j\leq r} \neq 0$. 

We continue to reduce the dimension to one as above but do not restrict ourselves to just hyperplanes, i.e. we can choose any smooth curve $C \subset \PP^2$ of degree $d$ not contained in the complement of $U$, not a component of the discriminant locus of $E$ and does not contain subvarieties in $B_E$. The union of all such curves is the Zariski open set $\PP^2\backslash B_E$, because given any point $t \in \PP^2(\CQ)\backslash B_E$, we can always find a smooth curve $C/\CQ$ of degree $d$ such that it contains $t$ but not points in $B_E$ and it is not a component of the discriminant locus of $E$ and $\PP^2\backslash U$. With this construction, over these infinitely many curves $C$ of degree $d$, we have $\det \left( \<P^i_C, P^j_C\>_{E_C} \right)_{1\leq i,j\leq r} \neq 0$. By continuity of the determinant function and Silverman's theorem (\cite{SHS}, Theorem B), we conclude that 
$\det \left( \<{P^i_C}_t, {P^j_C}_t\>_{{E_C}_t} \right)_{1\leq i,j\leq r} \neq 0,$ for all $t \in C(\CQ)$ with large enough $h_C(t)$. This means that the specialized points ${P^1_C}_t, \ldots, {P^r_C}_t$ are $\ZZ$-linearly independent. Since we always choose the hyperplanes (or smooth curves in the last induction step) not being contained in the complement of $U$, the specialization maps for $t \in U(\CQ)$ commute with reduction maps as shown in Figure 1.
\begin{figure}
$$\xymatrix{
E(\CQ(\PP^n)) \ar@{->}[r]^{\s_t} \ar@{->}[d]_{\text{red mod $\G$}} 
&E_t(\CQ)  \\
E_\G((\CQ(\G))) \ar@{->}[d] & \\
\vdots \ar@{->}[d]_{\text{red mod $C$}} & \\
E_C(\CQ(C)) \ar@{->}[uuur]_{\s_t} 
}$$
\caption{}
\end{figure}
So for $t \in C(\CQ) \cap U(\CQ)$ with $h_C(t)$ large enough, we have injectivity of $\s_t$ on both the free part and the torsion points of $E(K)$. 

Lastly, we will prove by induction on $n$ that the union of all such curves in Theorem B is Zariski dense in $\PP^n$. We have just shown in the previous paragraph that the case $n=2$ is true. For $E/\CQ(\PP^n)$ in general with $n \geq 2$, we have shown that the set of hyperplanes $\G \subset \PP^n$ such that condition $(\ref{LIG})$ holds on $E_\G/\CQ(\G\cong \PP^{n-1})$ is Zariski dense in ${\PP^n}^*$, thus the union of all such $\G$ (call them good $\G$) is Zariski dense in $\PP^n$. By induction hypothesis, the union of curves satisfying Theorem B for $E_\G/\CQ(\G\cong \PP^{n-1})$ (call them good curves of degree $d$ for $\G$) is Zariski dense in $\G$. So the union 
$$ \bigcup_{\text{good $\G$}} \text{union of good curves of degree $d$ for $\G$}$$ is Zariski dense in $\PP^n$.
\end{proof}
%%%%%%%%%%%%%%%%%%%%%%%%%%%%%%%%%%%%%%%%%%%%%%%%%%%%%%%%%%%%%%%%%%%%%%

\begin{acknowledgement}
I would like to thank my advisor, Joseph Silverman, for many enlightening discussions and encouragement. Also, many thanks to the referee for the valuable and insightful comments and suggestions
\end{acknowledgement}

%%%%%%%%%%%%%%%%%%%%%%%%%%%%%%%%%%%%%%%%%%%%%%%%%%%%%%%%%%%%%%%%%%%%%%%%
% Bibliography
%%%%%%%%%%%%%%%%%%%%%%%%%%%%%%%%%%%%%%%%%%%%%%%%%%%%%%%%%%%%%%%%%%%%%%%%

\end{document}